\newtheorem{theorem}{Theorem}[section]
\newtheorem{definition}[theorem]{Definition}
\newtheorem{proposition}[theorem]{Proposition}
\newtheorem{lemma}[theorem]{Lemma}
\newtheorem{corollary}[theorem]{Corollary}
\newtheorem{conjecture}[theorem]{Conjecture}
\theoremstyle{definition}
\newtheorem{remark}[theorem]{Remark}
\newtheorem{question}[theorem]{Question}
\newtheorem{example}[theorem]{Example}
\def\C{\mathbb{C}}
\def\N{\mathbb{N}}
\def\1{\mathbf{1}}
\def\<{\langle}
\def\>{\rangle}
\DeclareMathOperator{\codim}{codim}
\DeclareMathOperator{\Soc}{Soc}
\DeclareMathOperator{\im}{im}
\newcommand\nullset\varnothing
\begin{document}

\title{Complementary vectors of simplicial complexes}           

\author{Matt Larson and Alan Stapledon}

\begin{abstract}
We classify the complementary vectors of doubly Cohen--Macaulay complexes. This proves a conjecture of Swartz, negatively answers a question of Athanasiadis and Tzanaki, and gives new bounds on the number of independent sets in a matroid. Our technique works more generally for certain level quotients of Stanley--Reisner rings, giving new bounds on the face numbers of Buchsbaum* complexes. 
\end{abstract}

\maketitle

\vspace{-20 pt}

\setcounter{tocdepth}{1}
\tableofcontents

\section{Introduction}

We prove new bounds on the $h$-vectors of doubly Cohen--Macaulay simplicial complexes by calculating the Hilbert function of a certain quotient of the Stanley--Reisner ring of the complex. Our techniques can control the Hilbert function of certain quotients of the Stanley--Reisner rings of arbitrary simplicial complexes, and we work in that level of generality.

Let $\Delta$ be a simplicial complex of dimension $d-1$ with vertex set $V = \{ 1,\ldots, n \}$. Throughout the paper, we fix a field $k$ of characteristic $2$ and consider a purely transcendental field extension $K = k(a_{i,j})_{1 \le i \le d, \, 1 \le j \le n}$.
Let $K[\Delta]$ be the Stanley--Reisner ring of $\Delta$, and 
let  $\theta_i = \sum_{j} a_{i,j} x_j$ for $i \in \{1, \dotsc, d\}$. Then  $\theta_1, \dotsc, \theta_d$ is a linear system of parameters for $K[\Delta]$, and the corresponding quotient $H(\Delta) \coloneqq K[\Delta]/(\theta_1, \dotsc, \theta_d)$ is the 
\emph{generic artinian reduction} of $K[\Delta]$. 
Assume that $H^d(\Delta)$ is nonzero. Then
the \emph{level quotient} $\overline{H}(\Delta)$ of $H(\Delta)$ is the quotient of $H(\Delta)$ by the homogeneous ideal whose degree $q$ component is $\{ y \in H^q(\Delta) : y \cdot z = 0 \textrm{ for all } z  \in H^{d - q}(\Delta) \}$. By construction,  $\overline{H}(\Delta)$ is  a level algebra, i.e., for any nonzero $y \in \overline{H}^q(\Delta)$, there is $z \in \overline{H}^{d-q}(\Delta)$ with $y \cdot z$ nonzero. We are interested in bounding the following vector.

\begin{definition}
The $\overline{h}$-vector $(\overline{h}_0, \ldots, \overline{h}_d)$ of $\Delta$ is the Hilbert function of $\overline{H}(\Delta)$, i.e., $\overline{h}_q := \dim \overline{H}^q(\Delta)$ for $0 \le i \le d$.  
\end{definition}

We also consider the $\overline{g}$-vector 
$\overline{g}(\Delta) = (\overline{g}_0, \ldots, \overline{g}_{\lfloor d/2 \rfloor})$, where $\overline{g}_0 = \overline{h}_0 = 1$ and $\overline{g}_q = \overline{h}_q - \overline{h}_{q - 1}$ for $0 < q \le d/2$, which is an alternate encoding of the first half of $\overline{h}$. 
The $\overline{h}$-vector of $\Delta$ is independent of the choice of characteristic $2$ field $k$, see Proposition~\ref{prop:kindep}, and it is difficult to compute in general.

Before stating our results,  we recall some interesting special cases. 
Let $h(\Delta) = (h_0,\ldots,h_d)$ be the usual $h$-vector of $\Delta$, which is an encoding of the number of faces of $\Delta$ with $q$ elements for each $q$. If $\Delta$ is Cohen-Macaulay (over $k$), then $h(\Delta)$ is the Hilbert function of $H(\Delta)$. 
Recall that $\Delta$ is doubly Cohen-Macaulay (over $k$) if it  is Cohen-Macaulay, and, for every vertex $v$, the deletion $\Delta \smallsetminus \{ v \}$ is Cohen-Macaulay of dimension $d-1$. In this case, $H(\Delta)$ is level, i.e., $H(\Delta) = \overline{H}(\Delta)$  \cite[p. 94]{StanleyCombinatoricsCommutative}, and so
$\overline{h}(\Delta) = h(\Delta)$. 

There are many examples of doubly Cohen--Macaulay complexes coming from complexes with a \emph{convex ear decomposition} in the sense of Chari \cite[Section~3.2]{ChariTwoDecompositions}. Explicitly, $\Delta$ admits a convex ear decomposition if it contains a sequence of subcomplexes $\Sigma_1,\ldots,\Sigma_m$, where each $\Sigma_p$ is the boundary of a $d$-dimensional simplicial polytope,  $\Delta = \cup_p \Sigma_p$, and $\Sigma_q \cap (\cup_{p < q} \Sigma_p)$ is a $(d- 1)$-dimensional ball for all $1 < q \le m$ (see \cite[Remark~2.3]{AFConvexEar}). Swartz proved that if $\Delta$ admits a convex ear decomposition, then it is doubly Cohen-Macaulay 
\cite[Theorem~4.1]{SwartzgElements}. He observed that the proof works 
if one replaces each $\Sigma_j$ above with a $(d - 1)$-dimensional $k$-homology sphere and requires each $\Sigma_q \cap (\cup_{p < q} \Sigma_p)$ to be a $(d - 1)$-dimensional $k$-homology ball. Examples of simplicial complexes admitting convex ear decompositions (and hence of doubly Cohen-Macaulay complexes) include the independence complex of a coloop-free matroid
\cite[Theorems~2-3]{ChariTwoDecompositions}, 
the Bergman complex of a matroid \cite[Section~4]{NSInequalities}, and the augmented Bergman complex of a matroid \cite[Theorem~1.2]{AFConvexEar}.
See \cite{SwartzgElements,Schweig1,Schweig2} for further examples. 

More generally, Athanasiadis and Welker introduced the notion of a Buchsbaum* complex (over $k$) in \cite[Definition~1.2]{AWBuchsbaumComplexes}. 
All oriented 
simplicial  $k$-homology manifolds are Buchsbaum* \cite[Proposition~2.7]{AWBuchsbaumComplexes},
the Buchsbaum* complexes that are Cohen-Macaulay are precisely the doubly Cohen-Macaulay complexes \cite[Proposition~2.5]{AWBuchsbaumComplexes},  and there is  a generalization of convex ear decompositions that constructs Buchsbaum* complexes \cite[Theorem~3.10]{AWBuchsbaumComplexes}.
Generalizing a result of Novik and Swartz for oriented simplicial $k$-homology manifolds \cite{NovikSwartzGorensteinRings}, 
Nagel \cite[Corollary~4.1]{NagelLevelAlgebras} proved that if $\Delta$ is a 
Buchsbaum* complex, then 
\[
\overline{h}_q = \begin{cases}
h_q + \binom{d}{q} \sum_{p = 0}^{q - 1} (-1)^{q - p} \beta_{p} &\textrm{ for } q < d, \\
\beta_{d - 1} &\textrm{ for } q = d, \\
\end{cases}
\]
where $\beta_p = \dim \tilde{H}_p(|\Delta|;k)$ is
the $p$th reduced Betti number of the geometric realization $|\Delta|$ of $\Delta$ over $k$. 

There is a natural identification of $H^d(\Delta) = \overline{H}^d(\Delta)$ with the reduced cohomology group $\tilde{H}^{d-1}(|\Delta|; K)$, 
so linear maps $\mu \colon \overline{H}^d(\Delta) \to K$ can be identified with the reduced homology group $\tilde{H}_{d-1}(|\Delta|, K)$. See Section~\ref{sec:background} for details. Let $k'$ be a subfield of $K$. By the universal coefficient theorem, the natural map from $\tilde{H}_{d-1}(|\Delta|, k') \otimes_{k'} K$ to $\tilde{H}_{d-1}(|\Delta|, K)$ is an isomorphism. We say that a map $\mu \colon \overline{H}^d(\Delta) \to K$ is \emph{defined over $k'$} if it can be expressed as $\tilde{\mu} \otimes 1$, where $\tilde{\mu}$ corresponds to an element of $\tilde{H}_{d-1}(|\Delta|, k')$.   

For each nonzero map $\mu \colon \overline{H}^d(\Delta) \to K$, let $\overline{H}(\Delta, \mu)$ be the \emph{Gorenstein quotient} of $\overline{H}(\Delta)$, i.e., the quotient of $\overline{H}(\Delta)$ by the homogeneous ideal whose degree $q$ component is $(y \in \overline{H}^q(\Delta) : \mu(y \cdot z) = 0 \text{ for all }z \in \overline{H}^{d-q}(\Delta))$. By construction, $\overline{H}(\Delta, \mu)$ is equipped with an isomorphism $\mu \colon \overline{H}^{d}(\Delta, \mu) \to K$, and the pairing $\overline{H}^q(\Delta, \mu) \times \overline{H}^{d-q}(\Delta, \mu) \to K$ given by $(y, z) \mapsto \mu(y \cdot z)$ is nondegenerate. 
The following result, which follows easily from \cite[Theorem I]{APP} (see also Proposition~\ref{prop:anistropycycle}), significantly restricts the possible $\overline{h}$-vectors of simplicial complexes with nonzero $\beta_{d-1}$. 

\begin{theorem}\label{thm:SLP}
Let $\Delta$ be a simplicial complex of dimension $d-1$, and let $\mu \colon \overline{H}^d(\Delta) \to K$ be a nonzero map which is defined over $\mathbb{F}_2$. Let $\ell = x_1 + \dotsb + x_n$. For each $q \le d/2$, multiplication by $\ell^{d - 2q}$ induces an isomorphism from $\overline{H}^q(\Delta, \mu)$ to $\overline{H}^{d-q}(\Delta, \mu)$. 
\end{theorem}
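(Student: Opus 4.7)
The plan is to deduce the strong Lefschetz conclusion from the anisotropy assertion in Proposition~\ref{prop:anistropycycle} using the standard characteristic-$2$ device that converts an injectivity question for multiplication into an anisotropy question for a quadratic form.

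First I would reduce to injectivity. By construction, $\overline{H}(\Delta, \mu)$ is an Artinian Gorenstein algebra with socle in degree $d$, so the pairing $(a,b) \mapsto \mu(ab)$ is perfect. In particular $\dim_K \overline{H}^q(\Delta, \mu) = \dim_K \overline{H}^{d-q}(\Delta, \mu)$, and multiplication by $\ell^{d-2q}$ is a $K$-linear map between finite-dimensional $K$-vector spaces of equal dimension. It therefore suffices to show this map is injective (the case $q = d/2$ being trivial).

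Next, for the injectivity step, suppose $y \in \overline{H}^q(\Delta, \mu)$ satisfies $\ell^{d-2q} y = 0$. Pairing $y$ against $\ell^{d-2q} y \in \overline{H}^{d-q}(\Delta, \mu)$ via the Gorenstein pairing yields the quadratic identity
\[
0 = \mu\bigl(y \cdot \ell^{d-2q} y\bigr) = \mu(y^2 \ell^{d-2q}).
\]
Since $\mu$ is defined over $\mathbb{F}_2$, I would then invoke Proposition~\ref{prop:anistropycycle}, which, as a consequence of \cite[Theorem~I]{APP}, asserts that the quadratic form $Q_q(y) := \mu(y^2 \ell^{d-2q})$ on $\overline{H}^q(\Delta, \mu)$ is anisotropic. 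This forces $y = 0$, so multiplication by $\ell^{d-2q}$ is injective, hence an isomorphism.

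The hard part is entirely contained in Proposition~\ref{prop:anistropycycle}, whose proof rests on the anisotropy theorem of~\cite{APP}. That result uses characteristic $2$ in an essential way, since squaring is additive on sums and so $\ell^2 = x_1^2 + \cdots + x_n^2$ expands $Q_q(y)$ into a sum of monomial contributions indexed by the facets of $\Delta$; it also crucially requires the $\mathbb{F}_2$-rationality of $\mu$, so that the resulting expression can be analyzed over a small subfield before extending scalars to $K$. Once that anisotropy is granted, the Lefschetz reduction above is purely formal.
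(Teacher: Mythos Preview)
Your proposal is correct and matches the paper's own treatment. The paper does not spell out a proof of this theorem, merely noting that it ``follows easily from \cite[Theorem I]{APP} (see also Proposition~\ref{prop:anistropycycle})''; your argument---reduce to injectivity via Gorenstein duality, then use anisotropy of the form $(x,y)\mapsto\mu(xy\ell^{d-2q})$ to kill any element of the kernel---is precisely the intended easy deduction.
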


Choosing a basis $\mu_1, \dotsc, \mu_s$ for the dual of $\overline{H}^d(\Delta)$, we have an injective ring homomorphism
$$\overline{H}(\Delta) \hookrightarrow \bigoplus_{p=1}^{s} \overline{H}(\Delta, \mu_p).$$
We can choose each $\mu_p$ to be defined over $\mathbb{F}_2$, so multiplication by $\ell^{d-2q}$ induces an isomorphism from the degree $q$ part of the right-hand-side to the degree $d - 2q$ part of the right-hand-side, giving the following corollary. 

\begin{corollary}\label{cor:gelement}\cite[Corollary 3.1]{APP}
Let $\Delta$ be a simplicial complex of dimension $d-1$ with $\beta_{d - 1} \neq 0$. Let $\ell = x_1 + \dotsb + x_n$. For each $q \le d/2$, multiplication by $\ell^{d - 2q}$ induces an injection from $\overline{H}^q(\Delta)$ to $\overline{H}^{d-q}(\Delta)$. 
\end{corollary}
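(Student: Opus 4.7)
\proofsection{Proof proposal.}{} The plan is to deduce the corollary from Theorem~\ref{thm:SLP} by showing that $\overline{H}(\Delta)$ embeds into a direct sum of Gorenstein quotients $\overline{H}(\Delta, \mu_p)$ in which multiplication by $\ell^{d-2q}$ is already known to be an isomorphism.

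First, because $\beta_{d-1} \neq 0$, the identification $\overline{H}^d(\Delta) \cong \tilde{H}^{d-1}(|\Delta|;K)$ gives that $\overline{H}^d(\Delta)$ is a nonzero finite-dimensional $K$-vector space. I would choose a $K$-basis $\mu_1, \dotsc, \mu_s$ of its dual, which via the identification corresponds to a basis of $\tilde{H}_{d-1}(|\Delta|;K)$. Using the universal coefficient isomorphism $\tilde{H}_{d-1}(|\Delta|;\F_2) \otimes_{\F_2} K \xrightarrow{\sim} \tilde{H}_{d-1}(|\Delta|;K)$, I would pick the $\mu_p$ to come from a basis of the $\F_2$-vector space $\tilde{H}_{d-1}(|\Delta|;\F_2)$; by definition, each $\mu_p$ is then defined over $\F_2$.

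Next, I would assemble the quotient maps into a $K$-algebra homomorphism
\[
\Phi \colon \overline{H}(\Delta) \longrightarrow \bigoplus_{p=1}^{s} \overline{H}(\Delta, \mu_p).
\]
The key point is that $\Phi$ is injective. Indeed, if $y \in \overline{H}^q(\Delta)$ lies in the kernel, then for every $p$ and every $z \in \overline{H}^{d-q}(\Delta)$ we have $\mu_p(y \cdot z) = 0$; since the $\mu_p$ span the $K$-dual of $\overline{H}^d(\Delta)$, this forces $y \cdot z = 0$ for all $z \in \overline{H}^{d-q}(\Delta)$. But $\overline{H}(\Delta)$ is level by construction, so this forces $y = 0$.

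Finally, since $\Phi$ is a ring homomorphism, it sends $\ell^{d-2q} y$ to $(\ell^{d-2q} \Phi(y)_p)_p$. Applying Theorem~\ref{thm:SLP} to each Gorenstein quotient $\overline{H}(\Delta, \mu_p)$ (which is valid since each $\mu_p$ was arranged to be defined over $\F_2$), multiplication by $\ell^{d-2q}$ is an isomorphism on each summand, and hence an injection on the direct sum. Combined with the injectivity of $\Phi$, this yields injectivity of multiplication by $\ell^{d-2q}$ on $\overline{H}^q(\Delta)$. The main obstacle, and really the only nontrivial step beyond quoting Theorem~\ref{thm:SLP}, is verifying the injectivity of $\Phi$; the argument above isolates this as a direct consequence of the level property together with the fact that the $\mu_p$ are chosen to span the dual of $\overline{H}^d(\Delta)$.
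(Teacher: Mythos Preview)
Your argument is correct and is exactly the paper's approach: choose a basis $\mu_1,\dotsc,\mu_s$ of the dual of $\overline{H}^d(\Delta)$ defined over $\F_2$, embed $\overline{H}(\Delta)$ into $\bigoplus_p \overline{H}(\Delta,\mu_p)$, and apply Theorem~\ref{thm:SLP} to each summand. Your justification of the injectivity of $\Phi$ via the level property is the point the paper leaves implicit.
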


An $M$-vector (or $O$-sequence) is the Hilbert function of a finite dimensional standard graded $K$-algebra, i.e., a finite dimensional graded $K$-algebra $H$ which has $H^0 = K$ and which is generated by $H^1$. Macaulay gave explicit inequalities classifying $M$-vectors, see Proposition~\ref{prop:Mvec}. 
As $\overline{H}(\Delta)/(\ell)$ is generated in degree $1$ and its Hilbert function begins $(\overline{h}_0, \overline{h}_1 - \overline{h}_0, \dotsc, \overline{h}_{\lfloor d/2 \rfloor} - \overline{h}_{\lfloor d/2 \rfloor - 1}, \dotsc)$, we have the following corollary.

\begin{corollary}\label{cor:topheavy}
Let $\Delta$ be a simplicial complex of dimension $d-1$ with 
$\beta_{d - 1} \neq 0$. 
Then
$\overline{h}(\Delta)$ satisfies the following 
properties:
\begin{enumerate}
\item\label{i:ineq1} $\overline{h}_q \le \overline{h}_{d - q}$ for each $q \le d/2$,
\item\label{i:ineq2} $\overline{g}(\Delta)$ is an $M$-vector. 
\end{enumerate}
\end{corollary}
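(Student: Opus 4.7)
The plan is to derive both inequalities directly from Corollary~\ref{cor:gelement}, together with Macaulay's classification of $M$-vectors (Proposition~\ref{prop:Mvec}). The central object will be the standard graded quotient $A \coloneqq \overline{H}(\Delta)/(\ell)$.

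For part~(1), I would simply observe that Corollary~\ref{cor:gelement} gives, for each $q \le d/2$, an injection $\overline{H}^q(\Delta) \hookrightarrow \overline{H}^{d-q}(\Delta)$ induced by multiplication by $\ell^{d-2q}$, whence $\overline{h}_q \le \overline{h}_{d-q}$.

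For part~(2), the first step is to identify the initial segment of the Hilbert function of $A$. I would verify that multiplication by $\ell$ is injective on $\overline{H}^{q-1}(\Delta)$ for every $1 \le q \le \lfloor d/2 \rfloor$: for such $q$, Corollary~\ref{cor:gelement} provides injectivity of $\ell^{d - 2q + 2}$ on $\overline{H}^{q - 1}(\Delta)$, and the factorisation $\ell^{d - 2q + 2} = \ell^{d - 2q + 1} \cdot \ell$ forces $\ell$ itself to be injective in that degree. Consequently $\dim A^q = \overline{h}_q - \overline{h}_{q-1} = \overline{g}_q$ for $0 \le q \le \lfloor d/2 \rfloor$.

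The final step is to invoke Macaulay: since $A$ inherits from $\overline{H}(\Delta)$ the structure of a standard graded $K$-algebra, its Hilbert function is an $M$-vector, and any initial segment of an $M$-vector is again an $M$-vector (realised as the Hilbert function of the further quotient of $A$ by all elements of degree $> \lfloor d/2 \rfloor$). Hence $\overline{g}(\Delta)$ is an $M$-vector. I do not expect a serious obstacle; the only subtlety is the passage from injectivity of a power of $\ell$ to injectivity of $\ell$ itself, which is immediate.
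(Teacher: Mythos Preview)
Your proposal is correct and follows essentially the same route as the paper: the paper derives part~(1) directly from the injection in Corollary~\ref{cor:gelement}, and derives part~(2) by observing that $\overline{H}(\Delta)/(\ell)$ is a standard graded $K$-algebra whose Hilbert function begins $(\overline{h}_0,\overline{h}_1-\overline{h}_0,\dotsc,\overline{h}_{\lfloor d/2\rfloor}-\overline{h}_{\lfloor d/2\rfloor-1},\dotsc)$, hence $\overline{g}(\Delta)$ is an $M$-vector. Your explicit factorisation $\ell^{d-2q+2}=\ell^{d-2q+1}\cdot\ell$ to justify injectivity of $\ell$ in low degrees is precisely what the paper leaves implicit.
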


Moreover, for every $M$-vector $a = (a_0,\ldots,a_{\lfloor d/2 \rfloor})$, there is a $(d - 1)$-dimensional simplicial complex $\Delta$ that is the boundary of a $d$-dimensional simplicial polytope with $a = \overline{g}(\Delta)$ \cite{BLProofSufficiency}, so Corollary~\ref{cor:topheavy} is a complete classification of the $\overline{g}$-vectors of simplicial complexes with nonvanishing $\beta_{d-1}$. 

Properties \eqref{i:ineq1} and \eqref{i:ineq2} in Corollary~\ref{cor:topheavy} were proved by Swartz in the special case when 
$\Delta$ admits a convex ear decomposition \cite[Theorem~3.9]{SwartzgElements}, 
and they were conjectured by Bj\"orner and Swartz when $\Delta$ is doubly Cohen-Macaulay \cite[Problem~4.2]{SwartzgElements} and by Athanasiadis and Welker when $\Delta$ is Buchsbaum* \cite[Question~5.7]{AWBuchsbaumComplexes}. 

\medskip

We prove a strengthening of Corollary~\ref{cor:gelement} which gives further numerical consequences for $\overline{h}$-vectors. In light of the classification of possible $\overline{g}$-vectors in Corollary~\ref{cor:topheavy}, it is natural to consider the following vector.
\begin{definition}
The \emph{complementary vector} of $\Delta$ is $\overline{c}(\Delta) = (\overline{c}_0, \ldots, \overline{c}_{\lfloor (d - 1)/2 \rfloor})$, where $\overline{c}_q = \overline{h}_{d - q} - \overline{h}_{q}$ for each $q \le (d - 1)/2$. 
\end{definition}

When $\Delta$ is doubly Cohen-Macaulay, $\overline{c}_q = h_{d - q} - h_q$ for all $q$, and the vector $\overline{c}(\Delta)$ was studied by Swartz \cite{SwartzgElements}, who called it the \emph{complementary $h$-vector} of $\Delta$. We will classify the possible complementary vectors of simplicial complexes using the following algebraic result.

\begin{theorem}\label{thm:gorensteinhilbert}
Let $\Delta$ be a simplicial complex of dimension $d-1$. Let $k$ be a field of characteristic $2$, and assume that $\beta_{d - 1} \neq 0$. 
Then there is a map $\mu \colon \overline{H}^d(\Delta) \to K$ such that, for each $q \le d/2$, $\dim \overline{H}^q(\Delta, \mu) = \overline{h}_q$ and multiplication by $\ell^{d-2q}$ induces an isomorphism from $\overline{H}^q(\Delta, \mu)$ to $\overline{H}^{d-q}(\Delta, \mu)$. If $k$ is infinite, then $\mu$ can be chosen to be defined over $k$.  
\end{theorem}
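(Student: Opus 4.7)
The plan is to produce a single $\mu$ (defined over $k$ when $k$ is infinite) in $\widetilde{H}_{d-1}(|\Delta|, K) \cong (\overline{H}^d(\Delta))^*$ satisfying both (a) $\dim \overline{H}^q(\Delta, \mu) = \overline{h}_q$ for every $q \le d/2$ and (b) the Lefschetz isomorphism of the theorem. Setting $K_q(\mu) := \ker\bigl(\overline{H}^q(\Delta) \twoheadrightarrow \overline{H}^q(\Delta, \mu)\bigr)$, condition (a) becomes $K_q(\mu) = 0$. Both (a) and (b) are Zariski-open conditions on $\mu$: (b) is the invertibility of a matrix, and (a) is the maximal-rank condition on $(\mu(y_i z_j))$ in fixed bases of $\overline{H}^q(\Delta)$ and $\overline{H}^{d-q}(\Delta)$. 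Theorem~\ref{thm:SLP} provides the nonemptiness of the open set for (b): every nonzero $\mu$ defined over $\mathbb{F}_2$ lies there. The heart of the proof is the nonemptiness of the open set for (a).

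The key reduction is: under (b), the kernel $K_q(\mu)$ coincides with the radical of the symmetric bilinear form $B_\mu(y,z) := \mu(y z \ell^{d-2q})$ on $\overline{H}^q(\Delta)$. The inclusion $K_q(\mu) \subseteq \ker B_\mu$ is immediate, and for the reverse, the Lefschetz isomorphism on $\overline{H}(\Delta, \mu)$ yields $\ell^{d-2q}\overline{H}^q(\Delta) + K_{d-q}(\mu) = \overline{H}^{d-q}(\Delta)$; writing $z = \ell^{d-2q} z' + z''$ with $z'' \in K_{d-q}(\mu)$ gives $\mu(y z) = B_\mu(y, z') + \mu(y z'') = 0$ for $y \in \ker B_\mu$. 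In characteristic $2$, nondegeneracy of $B_\mu$ follows from anisotropy of the additive, Frobenius-semilinear form $q_\mu(y) = \mu(y^2 \ell^{d-2q}) = B_\mu(y,y)$, because $y \in \ker B_\mu$ forces $q_\mu(y) = 0$. So (a) reduces to constructing a $\mu$ whose $q_\mu$ is anisotropic on all of $\overline{H}^q(\Delta)$ for each $q \le d/2$.

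To build such a $\mu$, I would fix an $\mathbb{F}_2$-basis $\mu_1, \dotsc, \mu_s$ of $\widetilde{H}_{d-1}(|\Delta|, \mathbb{F}_2)$ and consider the generic combination $\mu = \sum_p c_p \mu_p$ with $c = (c_p) \in K^s$. One computes $q_\mu(y) = \sum_p c_p q_{\mu_p}(y)$, and Proposition~\ref{prop:anisotropycycle} applied to each $\mu_p$ gives $q_{\mu_p}(y) = 0 \Leftrightarrow y \in K_q(\mu_p)$. Since $\bigcap_p K_q(\mu_p) = 0$ by levelness, each $y \in \overline{H}^q(\Delta) \setminus 0$ determines a proper hyperplane $H_y = \{c : \sum_p c_p q_{\mu_p}(y) = 0\}$ in $K^s$, and condition (a) at fixed $q$ becomes $c \notin \bigcup_{[y] \in \mathbb{P}(\overline{H}^q(\Delta))} H_y$. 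I would then argue that this union is a proper subvariety of $K^s$, which is the main obstacle: a naive dimension count fails whenever $\overline{h}_q > s$, because an $(a-1)$-dimensional family of hyperplanes in $K^s$ can in principle cover $K^s$. Overcoming this requires exploiting the Frobenius-semilinear structure of $q_\mu$ (which forces the map $[y] \mapsto [q_{\mu_p}(y)]_p$ into $\mathbb{P}^{s-1}$ to factor through a restricted locus) together with the anisotropy in Proposition~\ref{prop:anisotropycycle} to control the essential dimension of the family. Generic specialization then gives a $\mu$ satisfying (a) for all $q \le d/2$ simultaneously; intersecting with the Lefschetz open set yields the theorem, and density of $k$-rational points over infinite $k$ gives the last sentence.
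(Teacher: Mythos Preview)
Your reductions are correct, and in fact anisotropy of $q_\mu$ alone (for all $q \le d/2$) already yields both (a) and (b): if $\mu(y^2\ell^{d-2q}) \neq 0$ for every nonzero $y$, then $y \notin K_q(\mu)$ and the image of $y$ under $\ell^{d-2q}$ survives in $\overline{H}^{d-q}(\Delta,\mu)$, so the Lefschetz map is injective, hence bijective by Gorenstein duality. So ``find $\mu$ with $q_\mu$ anisotropic on all of $\overline{H}^q(\Delta)$'' is a clean sufficient condition, and your separation into two open sets is unnecessary.

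The genuine gap is that you do not establish this sufficient condition. You correctly flag the obstacle---the hyperplanes $H_y$ are indexed by $\mathbb{P}(\overline{H}^q(\Delta))$, and once $\overline{h}_q > s$ a dimension count gives nothing---but your proposed resolution (``exploit the Frobenius-semilinear structure to force $[y] \mapsto [(q_{\mu_p}(y))_p]$ through a restricted locus and control essential dimension'') is a description of a hope, not an argument. Concretely, writing $\Psi(y) = (q_{\mu_p}(y))_p$ and fixing a basis $e_1,\dots,e_{\overline{h}_q}$ of $\overline{H}^q(\Delta)$, what you need is a $c \in K^s$ for which $c\cdot\Psi(e_1),\dots,c\cdot\Psi(e_{\overline{h}_q})$ are $K^2$-linearly independent in $K$; you give no mechanism for producing such a $c$, and it is not even clear that the stronger anisotropy statement is true for any $\mu$. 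The theorem only asks for nondegeneracy of $B_\mu$, which is strictly weaker.

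The paper avoids this issue entirely. Rather than aim for anisotropy of $q_\mu$, it builds $\mu = \sum_{p\le t} \lambda_p \mu_p$ inductively so that the linear map $\phi_\mu \colon y \mapsto \bigl(z \mapsto \mu(yz\ell^{d-2q})\bigr)$ satisfies $\ker\phi_\mu = \bigcap_{p\le t}\ker\phi_{\mu_p}$ at each stage. The inductive step is Adiprasito's perturbation lemma, whose hypothesis $\phi_{\mu_{t+1}}(\ker\phi_\mu)\cap\operatorname{im}(\phi_\mu)=0$ is verified by a single use of anisotropy for the $\mathbb{F}_2$-cycle $\mu_{t+1}$: if $x\in\ker\phi_\mu$ and $\phi_{\mu_{t+1}}(x)=\phi_\mu(y)$, evaluate at $x$ to get $\mu_{t+1}(x^2\ell^{d-2q})=\mu(xy\ell^{d-2q})=0$, whence $x\in\ker\phi_{\mu_{t+1}}$ by Proposition~\ref{prop:anistropycycle}. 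This is the missing idea in your proposal: invoke anisotropy only for the individual $\mathbb{F}_2$-defined $\mu_p$ (where it is actually known), and let the perturbation lemma assemble nondegeneracy of $B_\mu$ for the combination without ever requiring $q_\mu$ itself to be anisotropic.
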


This is indeed a strengthening of Corollary~\ref{cor:gelement}, as the isomorphism from $\overline{H}^q(\Delta) = \overline{H}^q(\Delta, \mu)$ to $\overline{H}^{d-2q}(\Delta, \mu)$ given by multiplication by $\ell^{d - 2q}$ factors through $\overline{H}^{d-q}(\Delta)$.

If $\Delta$ admits a convex ear decomposition, then 
Swartz proved that $\overline{c}(\Delta)$ 
is a sum of $M$-vectors \cite[Proposition~4.4]{SwartzgElements}, and he asked if the same result holds more generally when $\Delta$ is doubly Cohen-Macaulay \cite[Problem~4.8]{SwartzgElements}. Using Theorem~\ref{thm:gorensteinhilbert}, we give the following classification of the possible complementary vectors of simplicial complexes, affirmatively answering Swartz's question. 

\begin{corollary}\label{cor:complementaryvector}
Let $\Delta$ be a simplicial complex of dimension $d-1$. 
 Let $k$ be a field of characteristic $2$, and assume that $\beta_{d - 1} \neq 0$. 
Then the complementary $\overline{h}$-vector $\overline{c}(\Delta)$ is a sum of $M$-vectors. 
Moreover, suppose that a sequence 
$a = (a_0,\ldots,a_{\lfloor (d - 1)/2 \rfloor})$ is a sum of $M$-vectors. 
Then there is a $(d - 1)$-dimensional simplicial complex $\Delta$ that admits a convex ear decomposition  such that $a = \overline{c}(\Delta)$.
\end{corollary}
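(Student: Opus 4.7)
The plan is to prove the two implications of the corollary separately. For the forward direction, I would apply Theorem~\ref{thm:gorensteinhilbert} to choose $\mu$ such that $A := \overline{H}(\Delta, \mu)$ is Gorenstein of socle degree $d$, has SLP with respect to $\ell$, and satisfies $\dim A^q = \overline{h}_q$ for $q \le d/2$. I would then study the standard graded $K$-algebra $B := \overline{H}(\Delta)/\ell\overline{H}(\Delta)$. By Corollary~\ref{cor:gelement}, multiplication by $\ell$ is injective on $\overline{H}^q(\Delta)$ for $q < d/2$, so $\dim B^q = \overline{g}_q$ for $q \le \lfloor d/2 \rfloor$. The key step is showing $\dim B^q = \overline{c}_{d-q}$ for $q > d/2$, which I would prove via the snake lemma applied to $0 \to I \to \overline{H}(\Delta) \to A \to 0$, where $I = \ker$ is an $A$-module since $I^2 = 0$. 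The SLP on $A$ gives surjectivity of $\ell$ in degrees $\ge d/2$, and verifying that the relevant connecting map in the snake sequence vanishes should yield the claimed equality.

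Assuming this computation, $B$ has Hilbert function $(\overline{g}_0, \dotsc, \overline{g}_{\lfloor d/2 \rfloor}, \overline{c}_{\lfloor (d-1)/2 \rfloor}, \dotsc, \overline{c}_0)$, which is automatically an $M$-vector as it is the Hilbert function of a standard graded algebra. To recover $\overline{c}(\Delta)$ itself as a sum of $M$-vectors, I would consider the ideal $J := B_{> d/2} \subseteq B$. Because $B$ is standard graded, $J$ is generated in its lowest degree $d_0 = \lfloor d/2 \rfloor + 1$, and any filtration of $J$ by cyclic submodules $B \cdot y_1 \supseteq B \cdot y_1 + B \cdot y_2 \supseteq \dotsb$ built from a minimal generating set has successive quotients of the form $B/\mathrm{Ann}(y_i)$, each with Hilbert function an $M$-vector. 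Summing these and reading the contributions in the appropriate order then exhibits $\overline{c}(\Delta)$ as a sum of $M$-vectors.

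For the converse, given $a = \sum_{i=1}^m v^{(i)}$ with each $v^{(i)}$ an $M$-vector, I would construct an explicit convex ear decomposition realizing $a$. By Billera--Lee, each $v^{(i)}$ is the $g$-vector of the boundary $\Sigma_i$ of a simplicial $d$-polytope. Taking $\Sigma_1$ as the initial sphere and attaching $\Sigma_2, \dotsc, \Sigma_m$ successively along shellable $(d-1)$-balls yields a simplicial complex $\Delta$ with a convex ear decomposition, and Swartz's formula \cite[Proposition~4.4]{SwartzgElements} identifies $\overline{c}(\Delta)$ with the sum of the ear contributions; arranging the Billera--Lee constructions so that the $i$-th ear contributes precisely $v^{(i)}$ gives $\overline{c}(\Delta) = \sum_i v^{(i)} = a$.

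The hardest step is the Hilbert function identity $\dim B^q = \overline{c}_{d-q}$ for $q > d/2$, which reduces to showing $\ell \overline{H}^{q-1}(\Delta) \cap I^q = 0$; establishing this will require carefully combining the SLP on $A$, the level property of $\overline{H}(\Delta)$, and the genericity of $\ell$, and is the technical heart of the forward argument. A secondary challenge is engineering the ear attachments in the converse so that the prescribed $M$-vectors appear as ear contributions rather than as absolute $g$-vectors, which requires a refinement of the standard Billera--Lee construction that keeps track of the distinguished $(d-1)$-ball along which each ear is glued.
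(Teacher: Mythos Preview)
Your forward argument has a genuine gap: the identity $\dim B^q = \overline{c}_{d-q}$ for $q > d/2$, where $B = \overline{H}(\Delta)/\ell\,\overline{H}(\Delta)$, is false in general. Take $\Delta = K_4$ (the complete graph on four vertices), which is doubly Cohen--Macaulay with $d=2$ and $h = (1,2,3)$, so $\overline{g}=(1,1)$ and $\overline{c}=(2)$. Your formula predicts that $B$ has Hilbert function $(1,1,2)$, but this is not an $M$-vector and hence cannot be the Hilbert function of any standard graded algebra. Concretely, your ``key step'' $\ell\,\overline{H}^{q-1}(\Delta) \cap I^q = 0$ fails: although $I^2=0$ does hold for degree reasons (so $I$ is an $A$-module), when $q-1 \le d/2 < q$ one has $I^{q-1}=0$, and then your condition is asking that every $x \in \overline{H}^{q-1}$ with $\ell x \in I^q$ satisfies $\ell x = 0$ in $\overline{H}$. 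But such $x$ correspond exactly to $\ker(\ell\colon A^{q-1}\to A^q)$, which is nonzero whenever $\overline{g}_{q-1} > 0$, and there is no reason $\ell$ should annihilate these lifts in $\overline{H}$. There is a second, independent problem: even if $B$ had the claimed Hilbert function, filtering $J = B_{>d/2}$ by cyclic submodules generated in degree $d_0$ writes the \emph{reversed} sequence $(\overline{c}_{\lfloor (d-1)/2\rfloor},\ldots,\overline{c}_0)$ as a sum of $M$-vectors, and ``sum of $M$-vectors'' is not preserved under reversal (e.g.\ $(5,2,1)$ is such a sum but $(1,2,5)$ is not).

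The paper's proof avoids both issues by working dually. With $I = \ker(\overline{H}(\Delta) \to \overline{H}(\Delta,\mu))$ as you set up, one passes to the graded dual $I^*$ (so $(I^*)^q$ is the linear dual of $I^{d-q}$), whose Hilbert function is $\overline{c}(\Delta)$ in the correct order because $\dim I^{d-q} = \overline{h}_{d-q} - \overline{h}_q$ by Poincar\'e duality on the Gorenstein quotient. The module $I^*$ is generated in degree $0$ over $\overline{H}(\Delta)$ since $\Soc(I)$ sits inside $\Soc(\overline{H}(\Delta))$, which is concentrated in degree $d$ by levelness; this is a one-line socle argument and does not require any Lefschetz information beyond what goes into choosing $\mu$. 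No computation of $\overline{H}(\Delta)/\ell$ is needed. For the converse, the paper does not try to match ear contributions to prescribed $M$-vectors; instead it uses the characterization that a nonzero sum of $M$-vectors $(a_0,\ldots,a_m)$ has $(1,a_1,\ldots,a_m)$ an $M$-vector, realizes the latter as the $g$-vector of a single Billera--Lee polytope $Q$, builds one explicit complex $\Delta'$ from two suspended copies of $Q$ with $\overline{c}(\Delta')=(1,a_1,\ldots,a_m)$, and then bumps $\overline{c}_0$ up to $a_0$ by gluing $a_0$ copies of $\Delta'$ along a facet complement.
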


A sequence of nonnegative integers $(a_0, a_1, \dotsc, a_m)$ is a sum of $M$-vectors if and only if there is a graded module $M = M^0 \oplus M^1 \oplus \dotsb$ over a polynomial ring in finitely many variables which is generated as a module in degree $0$ and has $\dim M^q = a_q$ for all $q$. We show that $(a_0, a_1, \dotsc, a_m)$ is a sum of $M$-vectors if and only if it is identically $0$, or if $a_0 > 0$ and $(1, a_1, \dotsc, a_m)$ is an $M$-vector. See Proposition~\ref{prop:summvec}. In particular, Macaulay's inequalities classifying $M$-vectors immediately imply a numerical classification of all sums of $M$-vectors. 

\begin{proof}[Proof of Corollary~\ref{cor:complementaryvector}]
By Theorem~\ref{thm:gorensteinhilbert}, there is a linear map
$\mu \colon \overline{H}^d(\Delta) \to K$ 
such that $\dim \overline{H}^q(\Delta, \mu) = \overline{h}_q$ for each $q \le d/2$. Let $M$ be the kernel of the quotient map $\overline{H}(\Delta) \to \overline{H}(\Delta, \mu)$. Let $M^*$ be the graded dual of $M$, i.e., the degree $q$ component of $M^*$ is the dual of the vector space $M^{d-q}$.  This is a module over $\overline{H}(\Delta)$, with $y \in \overline{H}(\Delta)$ acting on $f \in M^*$ via $(y \cdot f)(z) = f(y \cdot z)$ for each $z \in M$. See, for example, \cite[Section~21.1]{EisenbudCommutativeAlgebra}.
The Hilbert function of $M^*$ is $\overline{c}(\Delta)$. 

It 
suffices to show that $M^*$ is generated in degree $0$ as a module over $\overline{H}(\Delta)$, as $\overline{H}(\Delta)$ is generated in degree $1$ and so is a quotient of a polynomial ring in finitely many variables. The perfect pairing between $M$ and $M^*$ induces a perfect pairing between 
$M^*/\overline{H}^{>0} \cdot M^*$ 
and $\Soc(M) := \{y \in M : y \cdot z = 0 \text{ for all }z \in \overline{H}^{>0}(\Delta)\}$. As $M$ is an ideal in the level algebra $\overline{H}(\Delta)$, 
$\Soc(M)$ is concentrated in degree $d$, so 
$M^*/\overline{H}^{>0} \cdot M^*$ 
is concentrated in degree $0$. Then  $M^*$ is generated in degree $0$ by the graded version of Nakayama's lemma. 

The construction of all sums of $M$-vectors as complementary vectors of simplicial complexes with convex ear decompositions is carried out in Proposition~\ref{prop:construction}.
\end{proof}

Using this classification of complementary vectors, we give a negative answer to \cite[Question~7.2]{ATSymmetricDecompositions}, see Example~\ref{ex:AT}.

Corollary~\ref{cor:topheavy} and Corollary~\ref{cor:complementaryvector} are still far from classifying the possible $\overline{h}$-vectors of simplicial complexes as 
as they do not account for the interactions between $\overline{g}(\Delta)$ and $\overline{c}(\Delta)$. We have the following further result in this direction.
Our proof of Corollary~\ref{cor:complementaryvector} shows that $\overline{c}(\Delta)$ is the Hilbert function of a graded module over the standard graded algebra $\overline{H}(\Delta)$ that is generated in degree $0$. A generalization of Macaulay's inequalities
for $M$-vectors to modules proved in \cite{HulGeneralizationMacaulay,BlancafortElias} then provides additional inequalities satisfied by $\overline{c}(\Delta)$ that depend on $\overline{h}_1$, see Corollary~\ref{cor:modulebound}. These inequalities are new even when $\Delta$ admits a convex ear decomposition. For example, they provide new inequalities for the $h$-vector of the independence complex of a coloop-free matroid.

\medskip 

The Hilbert functions of standard graded level algebras, such as the $\overline{h}$-vector of a simplicial complex, have been extensively studied. A full collection of references is too vast to list; for a survey of results up to 2003 see \cite{GHMSHilbertFunctionLevelAlgebra}, and for some newer results see \cite{ASNonUnimodal,MTStructureInverse, PSZGorensteinInterval, ZanelloLogConcavity, IarrobinoLogConcave, BGIZMinimalGorenstein}. 
In that more general setting, the conclusions of Corollary~\ref{cor:topheavy} and Corollary~\ref{cor:complementaryvector} can fail:  if $(a_0, \dotsc, a_d)$ is the Hilbert function of a level algebra, then the complementary vector $(a_d - a_0, a_{d-1} -a_1, \dotsc)$ can have negative entries; for example, $(1,13,12,13)$ is the Hilbert function of a level algebra by \cite[Example~4.3]{StanleyHilbertGradedAlgebras}. Even the Hilbert function of a level algebra which satisfies the conclusion of Corollary~\ref{cor:topheavy} need not satisfy the conclusion of Corollary~\ref{cor:complementaryvector}, see Example~\ref{ex:zanello}. A level algebra may satisfy the conclusion of Corollary~\ref{cor:gelement}, but not the conclusion of Theorem~\ref{thm:gorensteinhilbert}, see Example~\ref{ex:monomial}.

The closest generalization of the results of this paper to Hilbert function of arbitrary standard graded level algebras is \cite[Corollary~2.11]{GHMSHilbertFunctionLevelAlgebra}, which states that $(a_d, \dotsc, a_0)$ can be written as  the Hilbert function of a standard graded Gorenstein algebra plus a sum of $M$-vectors. Our deduction of Corollary~\ref{cor:complementaryvector} from Theorem~\ref{thm:gorensteinhilbert} is similar to the proof of \cite[Corollary~2.11]{GHMSHilbertFunctionLevelAlgebra}. 

On the other hand, we present generalizations of our results 
over  fields  of 
arbitrary characteristic
in Section~\ref{ss:positivechar}, and for balanced simplicial complexes in Section~\ref{ss:balanced}. We also conjecture an analogue of Corollary~\ref{cor:complementaryvector} for pure $O$-sequences (see Conjecture~\ref{conj:pure}).

\medskip

Our proof of Theorem~\ref{thm:gorensteinhilbert} is based on the \emph{anisotropy} of the Hodge--Riemann forms on $\overline{H}(\Delta, \mu)$, see Proposition~\ref{prop:anistropycycle}. This follows from the differential operators technique, which was introduced by Papadakis and Petrotou \cite{PapadakisPetrotoug} to give a proof of the algebraic $g$-conjecture for simplicial spheres, which is a special case of Theorem~\ref{thm:SLP}. The differential operators technique has subsequently been extended to more general simplicial complexes \cite{APP,KaruXiaoAnisotropy}.

\subsection*{Acknowledgments}
We thank Christos Athanasiadis, Luis Ferroni, Anthony  Iarrobino, Kalle Karu, Juan Migliore,  and Fabrizio Zanello for useful conversations. In particular, we thank Luis Ferroni for Example~\ref{ex:ultralog}, and we thank Fabrizio Zanello for Example~\ref{ex:zanello}. 
This work was mostly conducted while the authors were at the Institute for Advanced Study, where the second author received support from the Charles Simonyi endowment.

\section{Cycles and Lefschetz}\label{sec:background}

In this section, we establish some properties of $\overline{H}(\Delta)$. Although the results in this section follow from standard techniques, they are not available in the precise form that we need, so we sketch the proofs.
Throughout, we continue with the setup and notation of the introduction. In particular,
$\Delta$ is a simplicial complex of dimension $d-1$ with vertex set $V = \{ 1,\ldots, n \}$,
$k$ is a field of characteristic $2$, and $K = k(a_{i,j})_{1 \le i \le d, \, 1 \le j \le n}$. We continue to assume that  $H^d(\Delta)$ is nonzero.

\medskip

We first recall how to identify ${H}^d(\Delta) = \overline{H}^d(\Delta)$ with the reduced cohomology $\tilde{H}^{d-1}(|\Delta|, K)$, or, equivalently, how to identify the dual vector space $\operatorname{Hom}(H^d(\Delta), K)$ with $\tilde{H}_{d-1}(|\Delta|, K)$. 

Let $x_1, \dotsc, x_n \in H^1(\Delta)$ be the classes corresponding to vertices of $\Delta$. For a face $G$ of $\Delta$, let $x^G = \prod_{j \in G} x_j$. 
Each $H^s(\Delta)$ is spanned by $\{x^G : G \text{ face of }\Delta, \, |G| = s\}$ \cite[Theorem 9]{Lee}.

Suppose that $F_1, \dotsc, F_s$ are the facets of $\Delta$ of size $d$. For each facet $F = \{j_1, \dotsc, j_d\}$ of size $d$, let $[F]$ be the determinant of the $d \times d$ matrix whose $(i,p)$th entry is $a_{i,j_p}$. 
Using simplicial homology, we can identify $\tilde{H}_{d-1}(|\Delta|, K)$ with a subspace of $K^s$. Then each $y = (c_1, \dotsc, c_p) \in \tilde{H}_{d-1}(|\Delta|, K)$ defines a map 
$$\mu_y \colon K^s \to K, \quad \mu_y(e_p) = \frac{c_p}{[F_p]},$$
where $e_p$ is the $p$th standard basis vector.
As $H^{d}(\Delta)$ is spanned by monomials corresponding to facets of size $d$, $H^d(\Delta)$ can be identified with a quotient of $K^s$. 
The condition that $y$ lies in $\tilde{H}_{d-1}(|\Delta|, K)$ is exactly the condition that $\mu_y$ factors through $H^d(\Delta)$ \cite[Theorem 10]{Lee}. 
Therefore, a map $\mu \colon H^d(\Delta) \to K$ is defined over a subfield $k'$ of $K$ if and only if $\mu(x^F) \cdot [F]$ lies in $k'$ for each facet $F$ of size $d$.

We say that a symmetric bilinear form $B$ on a vector space $V$ is \emph{anisotropic} if, for all nonzero $x \in V$, $B(x,x)$ is nonzero. The following result is a strengthening of Theorem~\ref{thm:SLP}. It is a minor modification of \cite[Theorem III]{APP} or \cite[Theorem 4.4]{KaruXiaoAnisotropy}, and it is essentially the same as \cite[Theorem 7.6]{Oba}.

\begin{proposition}\label{prop:anistropycycle}
Let $\mu \colon \overline{H}^d(\Delta) \to K$ be a nonzero map which is defined over $\mathbb{F}_2$, and let $\overline{H}(\Delta, \mu)$ be the corresponding Gorenstein quotient. Let $\ell = x_1 + \dotsb + x_n \in \overline{H}^1(\Delta, \mu)$. For each $q \le d/2$, the symmetric bilinear form $\overline{H}^q(\Delta) \times \overline{H}^q(\Delta) \to K$ given by $(x, y) \mapsto \mu(x \cdot y \cdot \ell^{d - 2q})$ is anisotropic. 
\end{proposition}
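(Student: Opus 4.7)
The plan is to adapt the Papadakis--Petrotou differential operator technique, as implemented in \cite{APP} and \cite{KaruXiaoAnisotropy}. Fix $q \le d/2$ and a nonzero $y \in \overline{H}^q(\Delta, \mu)$; lifting to a representative $\tilde y \in K[\Delta]^q$, the task is to show
\[
f(a) \coloneqq \mu(\tilde y^2 \cdot \ell^{d - 2q}) \neq 0,
\]
viewed as an element of $K = k(a_{i,j})$. The strategy is to show that if $f(a) = 0$ then $y$ is already zero in $\overline{H}(\Delta, \mu)$, by applying a family of differential operators $\mathcal{D}_z$ in the $a_{i,j}$'s to $f(a)$ and extracting enough information to conclude that $y$ pairs trivially with every element of $\overline{H}^{d-q}(\Delta, \mu)$.

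First I would set up the algebraic framework: expand $\tilde y^2 \cdot \ell^{d-2q}$ as a $K$-linear combination of facet monomials $x^F$ by reducing modulo the $\theta_i$'s, and then pair against the dual facet basis using $\mu$. The hypothesis that $\mu$ is defined over $\mathbb{F}_2$ translates, via the explicit identification recalled in Section~\ref{sec:background}, into the statement that $\mu(x^F) \cdot [F] \in \mathbb{F}_2$ for every facet $F$ of size $d$. Consequently, in the expansion of $f(a)$ all of the $a_{i,j}$-dependence enters through the monomial reductions and through the Pl\"ucker determinants $[F]^{-1}$; the scalars contributed by $\mu$ itself are constants independent of the $a_{i,j}$. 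This is the pivotal property that allows the differential operators to extract nontrivial information about $\tilde y$ rather than to simply hit $\mu$.

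The main step, following \cite[Theorem III]{APP} and \cite[Theorem 4.4]{KaruXiaoAnisotropy}, is a differential identity of the form
\[
\mathcal{D}_z \, f(a) \;=\; c \cdot \mu(\tilde y \cdot z)^2
\]
for each $z$ in a spanning set of $\overline{H}^{d - q}(\Delta, \mu)$, where $\mathcal{D}_z$ is an explicit product of operators $\partial_{a_{i,j}}$ and $c \in K^{\times}$. Characteristic $2$ is essential: squares like $\tilde y^2$ are annihilated by $\partial_{a_{i,j}}$, so derivatives propagate only through the $\ell^{d - 2q}$ factor and the facet reductions coming from the $\theta_i$'s, and the final output is automatically a square of a $\mu$-pairing. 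Once the identity is established, vanishing of $f(a)$ forces $\mu(\tilde y \cdot z) = 0$ for every $z \in \overline{H}^{d-q}(\Delta, \mu)$, whence $y = 0$ in $\overline{H}(\Delta, \mu)$ by the defining property of the Gorenstein quotient.

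The main obstacle is verifying the differential identity in the precise generality of the statement. One has to track how reductions modulo the generic linear system of parameters $\theta_1, \dotsc, \theta_d$ interact with $\partial_{a_{i,j}}$, and identify the coefficient $c$ to rule out accidental cancellation; this is where Cramer's rule for the $\theta_i$'s and the resulting appearance of the Pl\"ucker determinants $[F]$ is used. In our setting this argument is a verbatim adaptation of the cited references, the only modification being that we work with an arbitrary level quotient $\overline{H}(\Delta)$ and an arbitrary $\mathbb{F}_2$-rational $\mu$, rather than with the Gorenstein ring associated to a fundamental class of a homology sphere or manifold; the $\mathbb{F}_2$-rationality of $\mu$ is precisely what replaces the rationality of the fundamental class in the earlier arguments.
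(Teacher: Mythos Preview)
Your proposal is correct and takes essentially the same differential-operator approach as the paper; both defer the substance to \cite{APP} and \cite{KaruXiaoAnisotropy}, with the $\mathbb{F}_2$-rationality of $\mu$ playing exactly the role you describe. The only cosmetic difference is that the paper's sketch follows the Karu--Xiao packaging (introduce a cone vertex, write $\mu$ as an $\mathbb{F}_2$-combination of simplex boundaries, and reduce to the simplex case via \cite[Lemma~4.5, Theorem~4.6]{KaruXiaoAnisotropy}) rather than invoking a single APP-style differential identity directly.
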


\begin{proof}
If one can verify that the analogue of \cite[Theorem 4.1]{KaruXiaoAnisotropy} holds in $\overline{H}(\Delta, \mu)$, then one can prove the result in exactly the same way as \cite[Theorem 4.4]{KaruXiaoAnisotropy} is proved. As in \cite[Section 2.6]{KaruXiaoAnisotropy}, by introducing an additional cone vertex, the map from the degree $d$ part of $K[x_1, \dotsc, x_n]$ to $K$ induced by $\mu$ can be written as an $\mathbb{F}_2$-linear combinations of maps which are boundaries of simplices. The argument in \cite[Lemma 4.5]{KaruXiaoAnisotropy} then reduces the desired statement to the case of the boundary of a simplex, which is treated in \cite[Theorem 4.6]{KaruXiaoAnisotropy}. 
\end{proof}

Finally, we prove that the $\overline{h}$-vector does not depend on the choice of field $k$ of characteristic $2$. 

\begin{proposition}\label{prop:kindep}
The dimension of $\overline{H}^q(\Delta)$ is independent of the choice of field $k$ of characteristic $2$.
\end{proposition}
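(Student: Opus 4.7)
The plan is to reduce the problem to the prime field $\F_2$ via a base-change argument. Every field of characteristic $2$ contains $\F_2$, so setting $\tilde{K} = \F_2(a_{i,j})$, we obtain an inclusion of fields $\tilde{K} \hookrightarrow K = k(a_{i,j})$ by sending $a_{i,j}$ to $a_{i,j}$. It suffices to prove the $K$-vector space identification
\[
\overline{H}^q(\Delta)_K \;\cong\; \overline{H}^q(\Delta)_{\tilde{K}} \otimes_{\tilde{K}} K,
\]
since then $\dim_K \overline{H}^q(\Delta)_K = \dim_{\tilde{K}} \overline{H}^q(\Delta)_{\tilde{K}}$, which does not depend on $k$.

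First I would check the analogous compatibility for the generic artinian reduction $H(\Delta)$ itself. The Stanley--Reisner ideal $I_\Delta$ is monomial, hence defined over $\F_2$, giving $K[\Delta] = \tilde{K}[\Delta] \otimes_{\tilde{K}} K$. The parameter forms $\theta_i = \sum_j a_{i,j} x_j$ have the same coefficients whether we work over $\tilde{K}$ or $K$, so by flatness of field extensions the ideal $(\theta_1,\ldots,\theta_d) K[\Delta]$ equals $(\theta_1,\ldots,\theta_d) \tilde{K}[\Delta] \otimes_{\tilde{K}} K$. Consequently $H^q(\Delta)_K = H^q(\Delta)_{\tilde{K}} \otimes_{\tilde{K}} K$ for every $q$, and the multiplication pairings $H^q \times H^{d-q} \to H^d$ are compatible under this base change.

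The remaining task is to show that passing to the level quotient also commutes with scalar extension. By construction, the degree $q$ part of the defining ideal is the annihilator $\mathrm{Ann}^q \subseteq H^q(\Delta)$ of $H^{d-q}(\Delta)$ under the multiplication map into $H^d(\Delta)$. Here I would invoke the elementary linear-algebra fact that for any bilinear pairing $B \colon V \times W \to Z$ of finite-dimensional vector spaces over a field $F$ and any extension $F \hookrightarrow L$, the annihilator of $W \otimes_F L$ in $V \otimes_F L$ under $B \otimes 1$ is exactly $\mathrm{Ann}_F(W) \otimes_F L$. (After choosing bases of $V$, $W$, $Z$, the annihilator condition becomes a homogeneous linear system with coefficients in $F$, whose solution space base-changes along $F \hookrightarrow L$.) Applying this to the multiplication pairing defined over $\tilde{K}$ gives $\mathrm{Ann}^q_K = \mathrm{Ann}^q_{\tilde{K}} \otimes_{\tilde{K}} K$, and combining with the previous paragraph yields $\overline{H}^q(\Delta)_K = \overline{H}^q(\Delta)_{\tilde{K}} \otimes_{\tilde{K}} K$.

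I do not anticipate any serious obstacle; the argument is entirely formal. The only subtlety is the bookkeeping check that each ingredient of the construction, namely the Stanley--Reisner ideal, the forms $\theta_i$, and the annihilator ideal defining the level quotient, is manifestly defined over $\tilde{K}$ and behaves well under the scalar extension $\tilde{K} \hookrightarrow K$.
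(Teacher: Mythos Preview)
Your proof is correct and takes essentially the same approach as the paper: both argue that every ingredient of the construction is defined over $\tilde K=\F_2(a_{i,j})$ and base-changes along the flat extension $\tilde K\hookrightarrow K$. The only cosmetic difference is that the paper writes the kernel of $H^q(\Delta)\to\overline H^q(\Delta)$ explicitly as an intersection of kernels of linear functionals $y\mapsto\mu_t(y\cdot x^J)$ coming from an $\F_2$-basis of $\tilde H_{d-1}(|\Delta|;\F_2)$, whereas you handle the annihilator step abstractly without invoking the homology identification.
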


\begin{proof}
Choose a basis $\mu_1, \dotsc, \mu_s$ for $\tilde{H}_{d-1}(|\Delta|, \mathbb{F}_2)$. As $\tilde{H}_{d-1}(|\Delta|, K)$ is identified with $\tilde{H}_{d-1}(|\Delta|, \mathbb{F}_2) \otimes_{\mathbb{F}_2} K$, this induces a basis for $\operatorname{Hom}(H^d(\Delta), K)$. The kernel of $H^q(\Delta) \to \overline{H}^q(\Delta)$ can be written as an intersection of subspaces indexed by some element $\mu_t$ in our chosen basis and a monomial $x^J$ in $K[\Delta]$ of degree $d - q$, where the subspace corresponding to $\mu_t$ and $x^J$ is the kernel of the map $H^q(\Delta) \to K$ given by $y \mapsto \mu_t(y \cdot x^J)$. As these subspaces are all defined over $\mathbb{F}_2(a_{i,j})_{1 \le i \le d, \, 1 \le j \le n}$, this implies that the dimension of $\overline{H}^q(\Delta)$ is the same as the dimension of the level quotient of the generic artinian reduction over $\mathbb{F}_2$. 
\end{proof}

\begin{question}
Let $\Delta$ be a simplicial complex of dimension $d-1$.  Throughout this paper, we assume that $\beta_{d - 1} = \dim H^d(\Delta)$ is nonzero. 
Can any results of this paper be extended to the case when $\beta_{d - 1} = 0$? In this case, if $m = \max \{ q : H^q(\Delta) \neq 0 \}$, then 
the level quotient $\overline{H}(\Delta)$ of $H(\Delta)$ is the quotient of $H(\Delta)$ by the homogeneous ideal whose degree $q$ component is $\{ y \in H^q(\Delta) : y \cdot z = 0 \textrm{ for all } z  \in H^{m - q}(\Delta) \}$. For example, if $\hat{\Delta}$ is the join of $\Delta$ with an $(r - 1)$-dimensional simplex, then 
$\dim \hat{\Delta} = d + r - 1$, $H(\hat{\Delta}) \cong H(\Delta)$, and 
$\overline{H}(\hat{\Delta}) \cong \overline{H}(\Delta)$ (after possibly extending the underlying field $K$). In particular, is there an analogue of Proposition~\ref{prop:anistropycycle}?
\end{question}

\section{Generic cycles}

In this section, we prove Theorem~\ref{thm:gorensteinhilbert}. We will deduce it as a formal algebraic consequence of Proposition~\ref{prop:anistropycycle}. For this, we will use Adiprasito's ``perturbation lemma.'' For self-containedness, we include a short and elementary proof. 

\begin{lemma}\label{lem:perturbation}\cite[Lemma~6.1]{Adiprasitog}
Let $V,W$ be finite dimensional vector spaces over a field $k$. Let $\alpha,\beta \colon V \to W$ be linear maps such that $\beta(\ker(\alpha)) \cap \im(\alpha) = \{ 0 \}$. Then, for all but finitely many $\lambda \in k$, $\ker(\alpha + \lambda \beta) = \ker(\alpha) \cap \ker(\beta)$. 
\end{lemma}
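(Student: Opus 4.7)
The plan is to prove Lemma~\ref{lem:perturbation} by a transcendental specialization argument: first reduce to the case $\ker(\alpha) \cap \ker(\beta) = 0$, then show $\alpha + T\beta$ is injective over $k(T)$ for a new transcendental $T$, and finally specialize to $\lambda \in k$. For the reduction, set $U_0 = \ker(\alpha) \cap \ker(\beta)$. Since $U_0 \subseteq \ker(\alpha + \lambda\beta)$ trivially, it suffices to prove the lemma after replacing $V$ with $V/U_0$ and $\alpha, \beta$ by their induced maps $\bar\alpha, \bar\beta \colon V/U_0 \to W$. The hypothesis is preserved because $\ker\bar\alpha = \ker(\alpha)/U_0$, $\bar\beta(\ker\bar\alpha) = \beta(\ker\alpha)$, and $\im\bar\alpha = \im\alpha$, so I may assume $\ker\alpha \cap \ker\beta = 0$, reducing the task to showing that $\alpha + \lambda\beta$ is injective for generic $\lambda$.

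Next, introduce a transcendental $T$ and view $\alpha + T\beta$ as a $k(T)$-linear map $V \otimes_k k(T) \to W \otimes_k k(T)$. Suppose $v = \sum_{i=0}^{r} T^i v_i$ with $v_i \in V$ and $v_r \neq 0$ lies in its kernel. Comparing powers of $T$ yields $\alpha(v_0) = 0$, the recursions $\alpha(v_{i+1}) + \beta(v_i) = 0$ for $0 \le i \le r - 1$, and $\beta(v_r) = 0$. I would prove by induction on $i$ that each $v_i = 0$. Assuming $v_0 = \dotsb = v_{i-1} = 0$, the $T^i$-coefficient relation forces $\alpha(v_i) = 0$, so $v_i \in \ker\alpha$. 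Then either $i < r$, in which case $\beta(v_i) = -\alpha(v_{i+1}) \in \beta(\ker\alpha) \cap \im\alpha = 0$ by hypothesis; or $i = r$, in which case $\beta(v_r) = 0$ directly. Either way $v_i \in \ker\alpha \cap \ker\beta = 0$, contradicting $v_r \neq 0$, so $\alpha + T\beta$ is injective over $k(T)$.

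Fixing bases of $V$ and $W$, injectivity of $\alpha + T\beta$ over $k(T)$ means that some $\dim(V) \times \dim(V)$ minor of the matrix representing $\alpha + T\beta$ is a nonzero polynomial in $k[T]$, which has only finitely many roots in $k$. For every other $\lambda \in k$, $\alpha + \lambda\beta$ is injective, which finishes the proof. I do not anticipate a serious obstacle; the only mild subtlety is handling the terminal case $i = r$ of the induction, where $\beta(v_r) = 0$ comes directly from the top-degree coefficient rather than from the hypothesis, and the hypothesis $\beta(\ker\alpha) \cap \im\alpha = 0$ is tailored precisely to make the inductive step go through.
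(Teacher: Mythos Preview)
Your proof is correct and takes a genuinely different route from the paper. The paper chooses splittings $V = A \oplus \ker(\alpha)$ and $W = \im(\alpha) \oplus B$ with $\beta(\ker\alpha) \subseteq B$, writes $\alpha$ and $\beta$ as block matrices, and observes that the top-left block $M + \lambda N$ (with $M$ invertible) fails to be invertible only when $-\lambda^{-1}$ is an eigenvalue of $M^{-1}N$; for the remaining $\lambda$ the kernel is computed directly. Your argument instead quotients by $\ker\alpha \cap \ker\beta$, passes to the transcendental extension $k(T)$, and shows $\alpha + T\beta$ is injective by a coefficient-by-coefficient induction that uses the hypothesis $\beta(\ker\alpha) \cap \im\alpha = 0$ at each step, then specializes via a nonvanishing minor. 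The paper's approach is shorter and pinpoints the exceptional $\lambda$ explicitly, while yours is essentially basis-free until the final specialization and makes the role of the hypothesis very transparent in the inductive mechanism. One small point worth stating explicitly: an arbitrary kernel element of $\alpha + T\beta$ over $k(T)$ is a priori a rational function in $T$, so you should clear denominators before writing $v = \sum T^i v_i$; this is routine but deserves a sentence.
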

\begin{proof}
Let $s = \codim \ker(\alpha)$. Choose a splitting $V = A \oplus \ker(\alpha)$, so $A$ is some subspace with $\dim A = s$. Choose a splitting $W = \im(\alpha) \oplus B$, so $\dim B = \dim W - s$. By assumption, $\beta(\ker(\alpha)) \cap \im(\alpha) = \{ 0 \}$, and so we may assume that $\beta(\ker(\alpha)) \subset B$. 
After choosing bases for the above subspaces,  
the matrix corresponding to $\alpha$ has the form
\[
\begin{bmatrix}
M & 0 \\
0 & 0 \\
\end{bmatrix}
\]
for some invertible $s \times s$ matrix $M$. The matrix corresponding to $\beta$ has the form
\[
\begin{bmatrix}
N & 0 \\
N' & N'' \\
\end{bmatrix}
\]
for some $s \times s$ matrix $N$, some $(\dim W - s) \times s$ matrix $N'$,  and some $(\dim W - s) \times (\dim V - s)$ matrix $N''$.
Observe that, if $\lambda \not= 0$, then $M + \lambda N$ is invertible if and only if $\lambda^{-1}I + M^{-1} N$ is invertible. This happens if and only if  $-\lambda^{-1}$ is not one of the (finitely many) eigenvalues of  $M^{-1} N$. 
Finally, if $M + \lambda  N$ is invertible, then $\ker(\alpha + \lambda \beta) = 
0_A \oplus \ker(N'') 
= \ker(\alpha) \cap \ker(\beta)$, where $0_A$ denotes the origin in $A$. 
\end{proof}

We say that $\overline{H}(\Delta, \mu)$ has the \emph{strong Lefschetz property} if, for each $q \le d/2$, multiplication by $\ell^{d - 2q}$ induces an isomorphism from $\overline{H}^q(\Delta, \mu)$ to $\overline{H}^{d-q}(\Delta, \mu)$. For example, Theorem~\ref{thm:SLP} statements that $\overline{H}(\Delta, \mu)$ has the strong Lefschetz property if $\mu$ is defined over $\mathbb{F}_2$. Using the argument in \cite[Lemma 5.1]{LNS}, it can be shown that this is equivalent to the usual definition of the strong Lefschetz property. 

\begin{proof}[Proof of Theorem~\ref{thm:gorensteinhilbert}]
Choose a basis $\mu_1, \dotsc, \mu_s$ of $\operatorname{Hom}(H^d(\Delta), K) = \tilde{H}_{d-1}(|\Delta|, K)$ which is defined over $\mathbb{F}_2$. We show by induction on $t$ that there are $(\lambda_1, \dotsc, \lambda_t) \in K^t$ such that, if we set $\mu = \sum_{p=1}^{t} \lambda_p \mu_p$, then for each $q \le d/2$, the kernel of $\overline{H}^q(\Delta) \to \overline{H}^q(\Delta, \mu)$ is the same as the kernel of $\overline{H}^q(\Delta) \to \oplus_{p=1}^{t} \overline{H}^q(\Delta, \mu_p)$ and $\overline{H}(\Delta, \mu)$ has the strong Lefschetz property. As $\overline{H}^q(\Delta)$ injects in $\oplus_{p=1}^{s} \overline{H}^q(\Delta, \mu_p)$, this implies the result. 

The case $t = 1$ holds by setting $\lambda_1 = 1$. 
For the inductive step, suppose that $t < s$ and, for each $q \le d/2$, $\mu = \sum_{p=1}^{t} \lambda_p \mu_t$ has the property that the kernel of $\overline{H}^q(\Delta) \to \overline{H}^q(\Delta, \mu)$ is the same as the kernel of $\overline{H}^q(\Delta) \to \oplus_{p=1}^{t} \overline{H}^q(\Delta, \mu_p)$ and $\overline{H}(\Delta, \mu)$ has the strong Lefschetz property. For $\nu \in \operatorname{Hom}(\overline{H}^d(\Delta), K)$ and some $q \le d/2$, let $\phi_{\nu} \colon \overline{H}^q(\Delta) \to \operatorname{Hom}(\overline{H}^q(\Delta), K)$ be the map given by $\phi_{\nu}(x) = (y \mapsto \nu(x \cdot y \cdot \ell^{d - 2q}))$. The map which sends $\nu \in \operatorname{Hom}(\overline{H}^d(\Delta), K)$ to $\phi_{\nu}$ is $K$-linear. 

Note that $\overline{H}(\Delta, \nu)$ has the strong Lefschetz property if and only if  
$\ker(\phi_{\nu}) = \{x \in \overline{H}^q(\Delta) : \nu(x \cdot y \cdot \ell^{d - 2q}) = 0 \text{ for all }y \in \overline{H}^q(\Delta)\}$
coincides with the kernel of the map $\overline{H}^q(\Delta) \to \overline{H}^q(\Delta, \nu)$. By the induction hypothesis, the kernel of $\phi_{\mu}$ coincides with the kernel of $\overline{H}^q(\Delta) \to \oplus_{p=1}^{t} \overline{H}^q(\Delta, \mu_p)$. 
We claim that
\begin{equation}\label{eq:phi}
\phi_{\mu_{t+1}}(\ker(\phi_{\mu})) \cap \im(\phi_{\mu}) = 0.
\end{equation}
Assuming this claim, we apply Lemma~\ref{lem:perturbation} with $\alpha = \phi_\mu$ and $\beta = \phi_{\mu_{t+1}}$ to deduce that for all but finitely many $\lambda \in K$, we have
$$ \ker(\phi_{\mu_{t+1}}) \cap \ker(\phi_{\mu}) = \ker(\phi_{\mu} + \lambda \phi_{\mu_{t+1}}) =  \ker(\phi_{{\mu} + \lambda\mu_{t+1}}) .$$
We can find a $\lambda$ satisfying this condition because $K$ is infinite.
For any such $\lambda$, the kernel of $\phi_{\mu} + \lambda \phi_{\mu_{t+1}}$ is the same as the kernel of $\overline{H}^q(\Delta) \to \oplus_{p=1}^{t+1} \overline{H}^q(\Delta, \mu_p)$. This implies that the inclusions
$$\ker(\overline{H}^q(\Delta) \to \oplus_{p=1}^{t+1} \overline{H}^q(\Delta, \mu_{t+1})) \subseteq \ker(\overline{H}^q(\Delta) \to \overline{H}^q(\Delta, \mu + \lambda \mu_{t+1})) \subseteq \ker(\phi_{\mu} + \lambda \phi_{\mu_{t+1}})$$
must be equalities, so $\overline{H}(\Delta, \mu + \lambda_{t+1} \mu_{t+1})$ has the strong Lefschetz property. 

It remains to prove the claim. Let $x$ be a class in $\ker(\phi_{\mu})$ such that there is some $y \in \overline{H}^q(\Delta)$ with $\phi_{\mu_{t+1}}(x) = \phi_{\mu}(y)$. We need to show that $\phi_{\mu_{t+1}}(x) = 0$. By definition, for any $z \in \overline{H}^q(\Delta)$, we have
$$\mu_{t+1}(x \cdot z \cdot \ell^{d - 2q}) = \mu(y \cdot z \cdot \ell^{d - 2q}).$$
Set $z = x$ in the above equation. As $x \in \ker(\phi_{\mu})$, we have $\mu(y \cdot x \cdot \ell^{d - 2q}) = 0$. Therefore $\mu_{t+1}(x \cdot x \cdot \ell^{d - 2q}) = 0$. As $\mu_{t+1}$ is defined over $\mathbb{F}_2$, Proposition~\ref{prop:anistropycycle} then implies that $x$ lies in the kernel of $\overline{H}^q(\Delta) \to \overline{H}^q(\Delta, \mu_{t+1})$, which is the kernel of $\phi_{\mu_{t+1}}$. 

Finally, if $k$ is infinite, then at each step we can choose $\lambda_t$ to lie in $k$, so $\mu$ will be defined over $k$. 
\end{proof}

The proof of Theorem~\ref{thm:gorensteinhilbert} relies on Lemma~\ref{lem:perturbation}, whose hypothesis is difficult to verify with weaker assumptions. The following examples shows that the conclusion of Theorem~\ref{thm:gorensteinhilbert} can fail if one uses a non-generic artinian reduction.

\begin{figure}
\begin{tikzpicture}[scale=4]
\draw (0,0) node[left] { $1$ } -- 
(1,0) node[right] { $2$ } -- 
(1,1) node[right] { $3$ } -- (0,1) node[left]{$4$} -- (0,0);
\draw (0,1) -- (0.5, 0.5) node[right] {$5$} -- (1,0);
\fill (0, 0) circle[radius = 0.5pt];
\fill (1, 0) circle[radius = 0.5pt];
\fill (0, 1) circle[radius = 0.5pt];
\fill (1, 1) circle[radius = 0.5pt];
\fill (0.5, 0.5) circle[radius = 0.5pt];
\end{tikzpicture}
\quad \quad \quad \quad 
\begin{tikzpicture}[scale=4]
\draw (0,0) node[left] { $1$ } -- 
(1,0) node[right] { $2$ } -- 
(1,1) node[right] { $3$ } -- (0,1) node[left]{$4$} -- (0,0);
\draw (0,1) -- (1,0);
\fill (0, 0) circle[radius = 0.5pt];
\fill (1, 0) circle[radius = 0.5pt];
\fill (0, 1) circle[radius = 0.5pt];
\fill (1, 1) circle[radius = 0.5pt];
\end{tikzpicture}
\caption{Some counterexamples to possible extensions of Theorem~\ref{thm:gorensteinhilbert}.}\label{fig:nongeneric}
\end{figure}
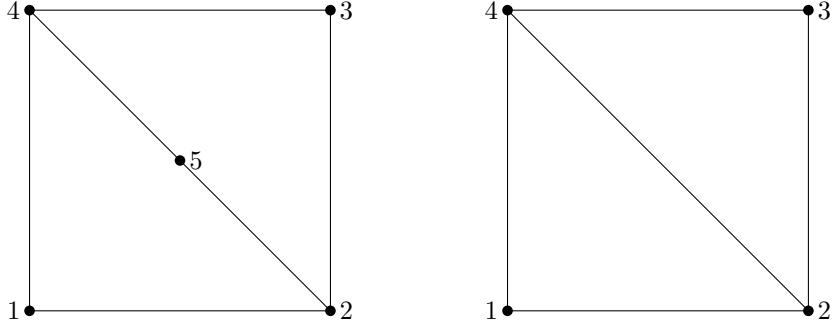

\begin{example}\label{ex:nongeneric}
Let $\Delta$ be the $1$-dimensional complex on the left of Figure~\ref{fig:nongeneric}, which is doubly Cohen--Macaulay and has $h$-vector $(1, 3, 2)$. 
Set $\theta_1^N = a_{1,1} x_1 + a_{1,3} x_3 + a_{1,5} x_5$ and $\theta_2 = a_{2,1} x_1 + a_{2,2} x_2 + a_{2,3} x_3 + a_{2,4} x_4 + a_{2,5} x_5$. Let $H_N(\Delta) = K[\Delta]/(\theta_1^N, \theta_2)$, which is level and has Hilbert function $(1, 3, 2)$. A basis of $\tilde{H}_1(|\Delta|, K)$ is given by the classes of the induced subcomplexes with vertex sets $\{1, 2, 4, 5\}$ and $\{2, 3, 4, 5\}$, respectively. Call these cycles $\mu_1$ and $\mu_2$. Let $\mu = \alpha \mu_1 + \beta \mu_2$ be a nonzero cycle. If $\alpha$ and  $\beta$ are both nonzero, then $a_{1,1} \alpha^{-1} x_1 + a_{1,3} \beta^{-1} x_3$ is in the kernel of $H_N(\Delta) \to \overline{H}_N(\Delta; \mu)$. If $\alpha = 0$ or $\beta = 0$, then $\mu$ is supported on a subcomplex with $h$-vector $(1, 2, 1)$. Therefore there is no Gorenstein quotient of $H_N(\Delta)$ with Hilbert function $(1, 3, 1)$. 
\end{example}

If $\Delta$ is a 
simplicial manifold,
then $\dim \overline{H}^d(\Delta) = 1$, and there is a canonical isomorphism $\deg \colon H^d(\Delta) \to K$ given by $\mu(x^F) = 1/[F]$ for each facet $F$.
For an odd-dimensional simplicial manifold, the determinant $D_{d/2}$ of the symmetric bilinear form $\overline{H}^{d/2}(\Delta) \times \overline{H}^{d/2}(\Delta) \to K$ is a well-defined element in $K^{\times}/(K^{\times})^2$.  In \cite[Theorem 1.1]{LNS}, the authors proved that $D_{d/2} = \lambda \prod_{F \text{ facet of }\Delta} [F] \in K^{\times}/(K^{\times})^2$
for some $\lambda \in k^{\times}/(k^{\times})^2$.
Example~\ref{ex:nongeneric} shows that the proof strategy does not extend to doubly Cohen--Macaulay complexes, and the following example shows that the natural generalization of \cite[Theorem 1.1]{LNS} is false. 

\begin{example}
Let $\Delta$ be the $1$-dimensional complex on the right of Figure~\ref{fig:nongeneric}, which is doubly Cohen--Macaulay and has $h$-vector $(1, 2, 2)$.  There is no nonzero cycle  $\mu \in \tilde{H}_1(|\Delta|, k)$ such that the determinant $D_{d/2,\mu}  \in K^{\times}/(K^{\times})^2$ of the symmetric bilinear form on $\overline{H}(\Delta, \mu)$ is divisible by 
$[\{i,j\}]$ for every edge $\{i, j\}$ of $\Delta$. 

Explicitly, a basis of $\tilde{H}_1(|\Delta|, K)$ is given by the classes of the induced subcomplexes with vertex sets $\{1, 2, 4 \}$ and $\{2, 3, 4 \}$, respectively. Call these cycles $\mu_1$ and $\mu_2$. Let $\mu = \alpha \mu_1 + \beta \mu_2$ be a nonzero cycle. Then
\[
D_{d/2,\mu} = \begin{cases}
 \alpha \beta [\{1,2\}][\{1,4\}][\{2,3\}][\{3,4\}] &\textrm{ if } \alpha, \beta \neq 0, \\
\alpha [\{1,2\}][\{1,4\}][\{2,4\}] &\textrm{ if }  \alpha \neq 0, \beta = 0, \\
 	\beta [\{2,3\}][\{2,4\}][\{3,4\}] &\textrm{ if }  \alpha = 0, \beta \neq 0. \\
\end{cases}
\]

\end{example}

\section{Constructions and M-vectors}

In this section, we discuss $M$-vectors and Hilbert function of modules. We give a strengthening of the bounds in Corollary~\ref{cor:complementaryvector}. 
We also give a construction which shows that every sum of $M$-vectors occurs as the complementary vector of a complex with a convex ear decomposition, completing the proof of Corollary~\ref{cor:complementaryvector}. 

\smallskip

Recall that an $M$-vector is the Hilbert function of a finite dimensional standard graded algebra. 
There are explicit inequalities which characterize $M$-vectors 
that were proved essentially by Macaulay (see \cite[p. 56]{StanleyCombinatoricsCommutative}), and which we now recall. 

Given positive integers $b$ and $i$, there is a unique expression
$$b = \binom{n_i}{i} + \binom{n_{i-1}}{i-1} + \dotsb + \binom{n_j}{j}, \quad \text{with} \quad n_i > n_{i-1} > \dotsb > n_j \ge j \ge 1.$$
We define $b^{\langle i \rangle} = \binom{n_i + 1}{i + 1} + \binom{n_{i-1} + 1}{i} + \dotsb + \binom{n_j + 1}{j + 1}$. We set $0^{\langle i \rangle} = 0$. Then we have the following result.
\begin{proposition}\cite{MacaulayPropertiesEnumeration}\label{prop:Mvec}
A sequence of nonnegative integers $(a_0, \dotsc, a_m)$ is an $M$-vector if and only if $a_0 = 1$ and $a_{i+1} \le a_i^{\langle i \rangle}$ for $1 \le i \le m - 1$. 
\end{proposition}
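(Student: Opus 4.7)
The plan is to prove both directions of Macaulay's theorem via the reduction to lex-segment ideals. Throughout, fix $R = k[x_1, \dots, x_n]$ with the reverse lex (or lex) order, where $n = a_1$ is large enough to surject onto $A^1$.

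For the necessity direction, I would argue as follows. Given any finite-dimensional standard graded algebra $A$ with Hilbert function $(a_0, \dots, a_m)$, present it as $A = R/I$ for a homogeneous ideal $I$, with $n = a_1$. First, replace $I$ by its generic initial ideal $\init(I)$ (with respect to a generic choice of coordinates); this preserves the Hilbert function and yields a monomial ideal. The key combinatorial fact, which I would state as a lemma, is that among all homogeneous ideals in $R$ with a prescribed Hilbert function, the \emph{lex-segment ideal} $I^{\mathrm{lex}}$ (whose degree-$i$ piece is the set of the top $\dim R_i - a_i$ monomials in lex order) exists and satisfies $R_1 \cdot I^{\mathrm{lex}}_i \subseteq I^{\mathrm{lex}}_{i+1}$, so that it is indeed an ideal with the same Hilbert function. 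Moreover, the size of $R_1 \cdot L$ for any lex segment $L$ of size $b$ in $R_i$ admits an explicit formula in terms of the Macaulay representation of $b$, yielding the inequality $a_{i+1} \le a_i^{\langle i \rangle}$.

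The central combinatorial computation I would carry out carefully is the following: if $L$ is the lex segment of size $\binom{n+i-1}{i} - b$ in $R_i$ (so its complement has size $b$), expanding the complement using the Macaulay representation $b = \binom{n_i}{i} + \binom{n_{i-1}}{i-1} + \cdots + \binom{n_j}{j}$ corresponds to stratifying the monomials outside $L$ by their smallest-indexed variable. One then verifies, via a double-counting argument on the action of $x_1, \dots, x_n$, that the complement of $R_1 \cdot L$ in $R_{i+1}$ has size exactly $b^{\langle i \rangle}$. This is the step I expect to be the main obstacle, as it requires careful bookkeeping with the Macaulay representation and the verification that the resulting segment is itself a lex segment.

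For the sufficiency direction, I would construct the algebra explicitly: given $(a_0, \dots, a_m)$ with $a_0 = 1$ and $a_{i+1} \le a_i^{\langle i \rangle}$, set $n = a_1$ and define $L_i \subseteq R_i$ to be the top $\binom{n+i-1}{i} - a_i$ monomials in lex order. The combinatorial computation above guarantees that $R_1 \cdot L_i$ has at least $\binom{n+i}{i+1} - a_i^{\langle i \rangle}$ elements, and is itself a lex segment; since $\binom{n+i}{i+1} - a_{i+1} \ge \binom{n+i}{i+1} - a_i^{\langle i \rangle}$, we have $R_1 \cdot L_i \subseteq L_{i+1}$. Thus $L = \bigoplus L_i$ is a homogeneous ideal, and $R/L$ is a standard graded algebra realizing the prescribed Hilbert function. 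Finally, one truncates by killing everything in degree $> m$ to obtain a finite-dimensional example.
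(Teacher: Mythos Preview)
The paper does not actually supply a proof of this proposition: it is stated with attribution to Macaulay \cite{MacaulayPropertiesEnumeration} and a pointer to \cite[p.~56]{StanleyCombinatoricsCommutative}, and is used as a black box. So there is no ``paper's own proof'' to compare against.

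Your sketch is the standard argument and is essentially correct. Two small comments. First, the parenthetical ``reverse lex (or lex)'' is misleading: the argument genuinely uses the lexicographic order, since lex segments are the extremal sets for the growth bound (reverse-lex segments do not have this property). Second, passing through the generic initial ideal is unnecessary here; Macaulay's original argument (and the modern presentations you are channeling) goes directly from an arbitrary homogeneous ideal to the lex-segment ideal with the same Hilbert function, and the existence of that lex ideal is exactly the content of the combinatorial computation you describe. The gin step is harmless but adds nothing. Otherwise your outline of both directions, including the construction for sufficiency by truncating $R/L$ in degrees above $m$, is the right shape.
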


We will need a generalization of Proposition~\ref{prop:Mvec} which classifies the Hilbert function of modules which are generated in degree $0$ over a 
polynomial ring in finitely many variables.
This was proved by Hulett \cite{HulGeneralizationMacaulay} in characteristic $0$ and by Blancafort and Elias \cite{BlancafortElias} in arbitrary characteristic. 

\begin{proposition}\label{prop:Mvecmodule}\cite[Theorem 3.2]{BlancafortElias}
A sequence of nonnegative integers $(a_0, \dotsc, a_m)$ is the Hilbert function of a graded module which is generated in degree $0$ over a 
polynomial ring  with $s$ generators if and only if $a_{i+1} \le q \binom{s + i}{i + 1} + r^{\langle i \rangle}$ for all $0 \le i \le m - 1$, where $q$ and $r$ are the unique nonnegative integers with $a_{i} = q \binom{s + i -1}{i} + r$ and $r < \binom{s + i - 1}{i}$. 
\end{proposition}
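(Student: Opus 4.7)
The plan is to reduce to Macaulay's classical theorem by a compression argument. Write the module $M$, generated in degree $0$ over $R = k[y_1, \dotsc, y_s]$, as a quotient $F/N$ where $F = R^t$ is a free $R$-module generated in degree $0$, with $t \geq a_0$. Order the monomial basis of $F$ by first comparing indices of the free generators and then using the lex order on monomials in the $y_j$. Call a submodule $L \subseteq F$ a \emph{lex submodule} if each graded piece $L^i$ is spanned by a lex-initial segment of monomials.

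First I would establish the compression result: for every submodule $N \subseteq F$, there exists a lex submodule $L \subseteq F$ with $\dim_k N^i = \dim_k L^i$ for every $i$. The strategy is iterative compression, handling one degree at a time by monomial exchanges, using the classical Clements--Lindstr\"om argument adapted to the module setting. In characteristic $0$ this can be streamlined by first passing to the generic initial module and then to the lex module; in positive characteristic, which is the generality of Blancafort--Elias, one must argue combinatorially throughout. This compression step is the main technical obstacle.

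Once the compression is available, the proposition reduces to a direct computation. Writing $a_i = q \binom{s+i-1}{i} + r$ with $r < \binom{s+i-1}{i}$, the lex complement $(F/L)^i$ consists of all $\binom{s+i-1}{i}$ monomials on each of the last $q$ free generators together with a lex-final segment of size $r$ on the preceding generator. Then $R^1 \cdot L^i$ already contains every degree-$(i+1)$ monomial on the remaining generators, and on the critical generator it maps the lex-final segment of size $\binom{s+i-1}{i} - r$ to a set whose complement has size at most $r^{\langle i \rangle}$ by Macaulay's theorem applied to an ideal in $R$. Summing over generators yields the bound $a_{i+1} \leq q \binom{s+i}{i+1} + r^{\langle i \rangle}$.

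For sufficiency, given a sequence $(a_0, \dotsc, a_m)$ satisfying the stated inequalities, I would construct a lex module realizing it. Take $t = a_0$ and define $L^i$ inductively so that $(F/L)^i$ has the prescribed lex-initial form of dimension $a_i$; the bound $a_{i+1} \leq q\binom{s+i}{i+1} + r^{\langle i \rangle}$ is precisely what guarantees $R^1 \cdot L^i \subseteq L^{i+1}$, so that this defines a genuine graded submodule and exhibits the desired module.
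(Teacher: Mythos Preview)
The paper does not actually prove this proposition; it is quoted as \cite[Theorem~3.2]{BlancafortElias}, with the characteristic~$0$ case attributed to Hulett \cite{HulGeneralizationMacaulay}. So there is no ``paper's own proof'' to compare against.

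Your outline is essentially the argument in those references: pass to a lex submodule of a free module with the same Hilbert function, then read off the numerical bound from the explicit shape of the lex complement, reducing the growth estimate on the critical free summand to Macaulay's classical theorem for a single copy of the polynomial ring. The sufficiency direction via an explicit lex construction is likewise standard. One caution: the compression step you flag as ``the main technical obstacle'' really is the entire content of the result, and your sketch of it (``iterative compression, handling one degree at a time by monomial exchanges'') is not a proof but a pointer to one. In characteristic~$0$ the generic-initial-module route (following Hulett) gives a clean reduction to the Borel-fixed case, but in positive characteristic the Blancafort--Elias argument is genuinely combinatorial and requires care; simply invoking ``Clements--Lindstr\"om adapted to the module setting'' hides the work. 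If you intend this as a self-contained proof rather than a summary of the literature, that step needs to be written out.
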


We use Proposition~\ref{prop:Mvecmodule} to give a description of the inequalities satisfied by sums of $M$-vectors.

\begin{proposition}\label{prop:summvec}
For a sequence $(a_0, \dotsc, a_m)$ of nonnegative integers, the following are equivalent:
\begin{enumerate}
\item\label{item:1} $(a_0, \ldots, a_m)$ is a sum of $M$-vectors.
\item\label{item:2} Either the sequence is identically $0$, or $a_0 > 0$ and $(1,a_1, \ldots, a_m)$ is an $M$-vector.
\item\label{item:3} $(a_0, \ldots, a_m)$ is the Hilbert function of a graded module which is generated in degree $0$ over a polynomial ring in finitely many variables. 
\end{enumerate}
\end{proposition}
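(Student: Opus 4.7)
My plan is to close the loop by proving $(2) \Rightarrow (1)$, $(1) \Rightarrow (3)$, $(3) \Rightarrow (1)$, and $(1) \Rightarrow (2)$ via elementary algebraic constructions. These four implications yield both $(1) \Leftrightarrow (3)$ and $(1) \Leftrightarrow (2)$, and hence the full statement.

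The central object for $(1) \Rightarrow (3)$ and $(1) \Rightarrow (2)$ is the direct product $B = \prod_{j=1}^{a_0} A_j$, where each $A_j$ is a standard graded $k$-algebra whose Hilbert function is the $j$-th $M$-vector $h^{(j)}$ in a fixed decomposition $(a_0, a_1, \ldots, a_m) = \sum_j h^{(j)}$. Note that $B^0 = k^{a_0}$ is spanned by orthogonal idempotents $e_1, \ldots, e_{a_0}$, and $B^i = \bigoplus_j A_j^i$ for $i \ge 1$. For $(1) \Rightarrow (3)$, I would take a polynomial ring $S$ with a disjoint set of variables for each factor; it acts on $B$ by letting the $j$-th block act through the surjection $S \twoheadrightarrow A_j$, and $B$ is then generated in degree $0$ as an $S$-module by the idempotents $e_j$. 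For $(1) \Rightarrow (2)$, I would let $B' \subseteq B$ be the subring generated by $1$ and $B^1$; since $A_j^1 \cdot A_{j'}^1 = 0$ in $B$ for $j \ne j'$ while each $A_j$ is standard graded, one checks inductively that $B'^i = B^i$ for all $i \ge 1$, whereas $B'^0 = k \cdot 1$. Thus $B'$ is a standard graded $k$-algebra with Hilbert function $(1, a_1, \ldots, a_m)$, so that sequence is an $M$-vector.

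The remaining two implications are immediate. For $(2) \Rightarrow (1)$, either the sequence is identically zero, which is an empty sum of $M$-vectors, or $a_0 > 0$, in which case $(a_0, a_1, \ldots, a_m) = (1, a_1, \ldots, a_m) + (a_0 - 1)(1, 0, \ldots, 0)$ displays it as a sum of $a_0$ $M$-vectors. For $(3) \Rightarrow (1)$, given a graded $S$-module $M$ generated in degree $0$, pick a $k$-basis $e_1, \ldots, e_{a_0}$ of $M^0$ and filter $M$ by the submodules $M_i = S \cdot (k e_1 + \cdots + k e_i)$, so $M_{a_0} = M$. Each quotient $M_i/M_{i-1}$ is cyclic, generated in degree $0$ by the image of $e_i$, and is therefore isomorphic as a graded $k$-algebra to $S/\operatorname{Ann}(\bar{e}_i)$; its Hilbert function is an $M$-vector, and summing over $i$ yields the decomposition. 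The only mildly subtle step is the Hilbert-function computation for $B'$ in $(1) \Rightarrow (2)$: the key observation is that the orthogonality of the idempotents in $B^0$ forces positive-degree parts of distinct factors of $B$ to multiply to zero, so the subring generated by $1$ and $B^1$ captures all of $B^{\ge 1}$ while collapsing $B^0$ to $k$.
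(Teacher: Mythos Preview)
Your proof is correct, and it differs from the paper's in an interesting way. The paper proves the cycle $(1)\Rightarrow(3)\Rightarrow(2)\Rightarrow(1)$: its $(1)\Rightarrow(3)$ and $(2)\Rightarrow(1)$ are essentially the same as yours, but for $(3)\Rightarrow(2)$ it invokes the Hulett--Blancafort--Elias characterization of Hilbert functions of modules (Proposition~\ref{prop:Mvecmodule}), observes that a module generated in degree $0$ over a polynomial ring in $s$ variables is also such a module over a polynomial ring in any $t\ge s$ variables, and then lets $t\to\infty$ so that the module inequalities collapse to Macaulay's inequalities for $(1,a_1,\dots,a_m)$.

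You instead close the loop entirely by elementary means, proving $(3)\Rightarrow(1)$ via a filtration by cyclic submodules and $(1)\Rightarrow(2)$ via the subring $B'\subseteq \prod_j A_j$ generated by $1$ and $B^1$. Both arguments are clean and avoid citing Proposition~\ref{prop:Mvecmodule} altogether. The trade-off: the paper's route is shorter once one has the module Macaulay bounds in hand (which the authors need anyway for Corollary~\ref{cor:modulebound}), whereas your route is self-contained and makes the equivalence independent of that machinery. One small point worth stating explicitly in your write-up of $(3)\Rightarrow(1)$: the quotient $M_i/M_{i-1}$ is nonzero because $(M_{i-1})^0=\Span\{e_1,\dots,e_{i-1}\}$ does not contain $e_i$, so its Hilbert function really is an $M$-vector (starting with~$1$) rather than the zero sequence.
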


\begin{proof}
\eqref{item:1} implies \eqref{item:3}: A standard graded algebra $A$ is a module over the polynomial ring $\operatorname{Sym} A^1$ which is generated in degree $0$, and so an $M$-vector is the Hilbert function of a module which is generated in degree $0$. We realize a sum of $M$-vectors as the Hilbert function of a module which is generated in degree $0$ by taking the direct sum of these modules. 

\eqref{item:3} implies \eqref{item:2}: If $N$ is a module which is generated in degree $0$ over a polynomial ring in $s$ generators, then it is also a module which is generated in degree $0$ over a polynomial ring in $t$ generators for any $t \ge s$, with the extra generators acting by $0$. For $t$ sufficiently large, the inequalities in Proposition~\ref{prop:Mvecmodule} become the inequalities in \eqref{item:2}.

\eqref{item:2} implies \eqref{item:1}: We can write $(a_0, \dotsc, a_m) = (1, a_1, \dotsc, a_m) + \sum_{i=1}^{a_0 - 1} (1, 0, \dotsc, 0)$ if $a_0 > 0$, and the sequence is the empty sum of $M$-vectors if it is identically zero. 
\end{proof}

\begin{remark}\label{rem:internalzeros}
A sum of $M$-vectors $(a_0,\ldots,a_m)$ has no internal zeros in the sense that if $a_i > 0$ for some $i > 0$, then $a_{i - 1} > 0$. 
\end{remark}

The proof of Corollary~\ref{cor:complementaryvector} shows that the complementary vector of $\Delta$ is a graded module which is generated in degree $0$ over $\overline{H}(\Delta)$. Because $\overline{H}(\Delta)$ is a quotient of a polynomial ring with $\overline{h}_1$ generators, Proposition~\ref{prop:Mvecmodule} gives a sharper bound on $\overline{c}(\Delta)$ in terms of $\overline{h}_1$. 

\begin{corollary}\label{cor:modulebound}
Let $\Delta$ be a simplicial complex of dimension $d-1$. 
Let $k$ be a field of characteristic $2$, and assume that $\beta_{d - 1} \neq 0$. 
We have $\overline{c}_{i+1} \le q_i \binom{\overline{h}_1 + i}{i + 1} + r^{\langle i \rangle}_i$ for all 
$0 \le i \le (d - 3)/2$, where $q_i$ and $r_i$ are the unique nonnegative integers with $\overline{c}_{i} = q_i \binom{\overline{h}_1 + i -1}{i} + r_i$ and $r < \binom{\overline{h}_1 + i - 1}{i}$. 
\end{corollary}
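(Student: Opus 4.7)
The plan is to obtain this corollary as an immediate application of Proposition~\ref{prop:Mvecmodule} to the module $M^*$ constructed in the proof of Corollary~\ref{cor:complementaryvector}. The content beyond that proof is the numerical observation that $\overline{H}(\Delta)$ is a quotient of a polynomial ring in exactly $\overline{h}_1$ variables, so one may take $s = \overline{h}_1$ in the module Macaulay bound.

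Concretely, I would first recall the module produced in the proof of Corollary~\ref{cor:complementaryvector}: fix $\mu$ as provided by Theorem~\ref{thm:gorensteinhilbert}, let $M$ be the kernel of the quotient map $\overline{H}(\Delta) \twoheadrightarrow \overline{H}(\Delta,\mu)$, and let $M^*$ denote its graded dual, viewed as an $\overline{H}(\Delta)$-module. Two facts were already established in that proof: the Hilbert function of $M^*$ is $\overline{c}(\Delta)$, and $M^*$ is generated in degree $0$ over $\overline{H}(\Delta)$ via the graded Nakayama lemma applied to the socle computation.

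Next, I would observe that $\overline{H}(\Delta)$ is standard graded with $\dim_K \overline{H}^1(\Delta) = \overline{h}_1$, so choosing a $K$-basis of $\overline{H}^1(\Delta)$ presents $\overline{H}(\Delta)$ as a graded quotient of the polynomial ring $K[y_1,\dotsc,y_{\overline{h}_1}]$. Pulling back the module structure, $M^*$ becomes a graded $K[y_1,\dotsc,y_{\overline{h}_1}]$-module generated in degree $0$. Applying Proposition~\ref{prop:Mvecmodule} with $s = \overline{h}_1$ to this module gives, for every $i$ such that both $\overline{c}_i$ and $\overline{c}_{i+1}$ are defined (that is, $0 \le i \le (d-3)/2$), exactly the inequality
\[
\overline{c}_{i+1} \le q_i \binom{\overline{h}_1 + i}{i+1} + r_i^{\langle i \rangle},
\]
with $q_i, r_i$ the unique nonnegative integers satisfying $\overline{c}_i = q_i \binom{\overline{h}_1 + i - 1}{i} + r_i$ and $r_i < \binom{\overline{h}_1 + i - 1}{i}$.

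There is no substantial obstacle here beyond the bookkeeping just described; all of the real work was carried out in Theorem~\ref{thm:gorensteinhilbert} and the proof of Corollary~\ref{cor:complementaryvector}. The only conceptual point to be explicit about is that the Macaulay-type bound in Proposition~\ref{prop:Mvecmodule} is sharper for smaller $s$, and hence the improvement over Corollary~\ref{cor:complementaryvector} comes precisely from using $s = \overline{h}_1$ rather than the number of vertices $n$ (which corresponds to the polynomial ring $K[x_1,\dotsc,x_n]$ originally acting on $K[\Delta]$).
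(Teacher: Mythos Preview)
Your proposal is correct and matches the paper's approach exactly: the paper also simply observes that the module $M^*$ from the proof of Corollary~\ref{cor:complementaryvector} is generated in degree $0$ over $\overline{H}(\Delta)$, that $\overline{H}(\Delta)$ is a quotient of a polynomial ring in $\overline{h}_1$ variables, and then invokes Proposition~\ref{prop:Mvecmodule} with $s=\overline{h}_1$.
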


Because the independence complex of a coloop-free matroid is doubly Cohen-Macaulay \cite[p. 94]{StanleyCombinatoricsCommutative}, Corollary~\ref{cor:modulebound} gives new inequalities for the $h$-vectors of independence complexes of coloop-free matroids. There has been great interest in proving inequalities for $h$-vectors of independence complexes of matroids. For example, the authors of \cite{ADH} and \cite{BST} independently proved that the $h$-vector of the independence complex $(h_0, \dotsc, h_d)$ is log-concave, i.e., $h_i^2 \ge h_{i-1}h_{i+1}$ for $1 \le i \le d-1$. In \cite[Section 6.10]{BrylawksiTutte}, Brylawksi conjectures that the $h$-vector of the \emph{broken circuit complex} of a matroid is ultra log-concave, i.e., it remains log-concave after dividing each entry by appropriate binomial coefficients. The independence complex of a matroid is the broken circuit complex of the free coextension of that matroid, so this conjecture also predicts inequalities for independence complexes of matroids. Luis Ferroni pointed out to us that there are small counterexamples to this conjecture. 

\begin{example}\label{ex:ultralog}
There is a matroid of rank $4$ on $6$ elements for which the $h$-vector of the independence complex is $(1, 2, 2, 2, 2)$, which is not ultra log-concave and so gives a counterexample to Brylawksi's conjecture. 
\end{example}

We next turn our attention to constructing examples. We will use the following lemma of Chari. 	If $h(\Delta) = (h_0,\ldots,h_d)$ is the $h$-vector of a $d$-dimensional simplicial complex, then we write $h(\Delta;t) = \sum_{i = 0}^d h_i t^i$.

\begin{lemma}\label{lem:Charicompute} \cite[Lemma~3]{ChariTwoDecompositions} 
Let $\Delta$ be a $d$-dimensional simplicial complex that admits a convex ear decomposition, i.e.,  it contains a sequence of subcomplexes $\Sigma_1,\ldots,\Sigma_m$, where each $\Sigma_p$ is the boundary of a $d$-dimensional simplicial polytope,  $\Delta = \cup_p \Sigma_p$, and $\Sigma_q \cap (\cup_{p < q} \Sigma_p)$ is a $(d- 1)$-dimensional ball for all $1 < q \le m$. Then
     \[
h(\Delta;t) = h(\Sigma_1;t) + \sum_{q = 2}^m t^d h(B_q;t^{-1}),
\]
where $B_q$ is the closure of $\Sigma_q \smallsetminus (\Sigma_q \cap (\cup_{p < q} \Sigma_p))$.
\end{lemma}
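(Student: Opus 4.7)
The plan is to reduce the lemma to an identity about $f$-vectors by inclusion--exclusion, and then to invoke Dehn--Sommerville duality in two places. Throughout, I would use the common normalization $h(\Gamma;t) \coloneqq \sum_{i=0}^{d} f_{i-1}(\Gamma)\, t^{i}(1-t)^{d-i}$ applied to every subcomplex $\Gamma \subseteq \Delta$, regardless of its actual dimension, so that the assignment $\Gamma \mapsto h(\Gamma;t)$ is $\mathbb{Z}$-linear in $f$-vectors and additivity can be used without correction terms.

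First I would build $\Delta$ one ear at a time. Writing $\Delta_q = \Sigma_1 \cup \cdots \cup \Sigma_q$ and $A_q = \Sigma_q \cap \Delta_{q-1}$, inclusion--exclusion in each dimension gives $f_i(\Delta_q) = f_i(\Delta_{q-1}) + f_i(\Sigma_q) - f_i(A_q)$, which by linearity promotes to $h(\Delta_q;t) = h(\Delta_{q-1};t) + h(\Sigma_q;t) - h(A_q;t)$. Applying the same principle to the decomposition $\Sigma_q = A_q \cup B_q$ (whose intersection is $\partial A_q = \partial B_q$) yields $h(\Sigma_q;t) - h(A_q;t) = h(B_q;t) - h(\partial B_q;t)$. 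Combining these inductively produces
\[
h(\Delta;t) \;=\; h(\Sigma_1;t) + \sum_{q=2}^{m}\bigl(h(B_q;t) - h(\partial B_q;t)\bigr).
\]

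The heart of the argument is then the relative Dehn--Sommerville identity $h(B;t) - h(\partial B;t) = t^{d}\, h(B;t^{-1})$ for every $(d-1)$-dimensional simplicial ball $B$. I would establish this by the \emph{doubling trick}: let $S$ be the $(d-1)$-sphere obtained by gluing two copies of $B$ along $\partial B$, so additivity gives $h(S;t) = 2\, h(B;t) - h(\partial B;t)$. Dehn--Sommerville applied to the sphere $S$ gives $h(S;t) = t^{d} h(S;t^{-1})$, while Dehn--Sommerville applied to the $(d-2)$-sphere $\partial B$, combined with the fact that the $d$-normalization multiplies the true $h$-polynomial of a $(d-2)$-dimensional complex by $(1-t)$, gives $t^{d}\, h(\partial B;t^{-1}) = -h(\partial B;t)$. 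Substituting these into the additivity identity for $S$ and solving yields $h(B;t) - h(\partial B;t) = t^{d} h(B;t^{-1})$. Applying this identity to each $B_q$ in the displayed equation above finishes the proof.

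The main obstacle is the careful tracking of the dimension normalization: $B_q$ is $(d-1)$-dimensional but $\partial B_q$ is only $(d-2)$-dimensional, so the additivity step makes sense only after fixing a uniform ambient $d$-normalization, and the Dehn--Sommerville identity for the lower-dimensional sphere $\partial B_q$ must be stated in that normalization (which is where the minus sign enters, and without which the doubling argument would not produce a symmetric identity). Once this bookkeeping is in place, the remainder is a short manipulation of polynomial identities.
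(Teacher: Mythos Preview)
The paper does not prove this lemma; it is simply quoted with a citation to Chari \cite{ChariTwoDecompositions}. Your argument is correct and is in fact the standard one: additivity of $f$-vectors under the union $\Delta_q=\Delta_{q-1}\cup\Sigma_q$ and then under $\Sigma_q=A_q\cup B_q$ reduces everything to the relative Dehn--Sommerville identity $h(B;t)-h(\partial B;t)=t^{d}h(B;t^{-1})$ for a $(d-1)$-ball, and your bookkeeping with the ambient $d$-normalization (in particular the sign $t^{d}h(\partial B;t^{-1})=-h(\partial B;t)$ for the $(d-2)$-sphere $\partial B$) is exactly right.

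One small caveat on the doubling step: the double of a simplicial ball along its boundary need not be a \emph{simplicial} complex (e.g.\ doubling a single edge yields a bigon). This does not affect the computation, since the Dehn--Sommerville relations are linear identities among face numbers that hold for any regular CW homology sphere, and your additivity formula $h(S;t)=2h(B;t)-h(\partial B;t)$ is purely formal. If you prefer to stay in the simplicial category, you can subdivide one copy of $B$ before gluing, or simply invoke the relative Dehn--Sommerville relations for simplicial balls directly rather than deriving them by doubling.
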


The following proposition completes the proof of Corollary~\ref{cor:complementaryvector}. 

\begin{proposition}\label{prop:construction}
Suppose that a sequence of nonnegative integers $a = (a_0,\ldots,a_{\lfloor (d - 1)/2 \rfloor})$ is a sum of $M$-vectors. 
Then there is a $(d - 1)$-dimensional simplicial complex $\Delta$ that admits a convex ear decomposition  such that $a = \overline{c}(\Delta)$.
\end{proposition}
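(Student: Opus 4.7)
The plan is to construct $\Delta$ explicitly as a union of boundaries of simplicial $d$-polytopes. By Proposition~\ref{prop:summvec}, either $a$ is identically zero---in which case I take $\Delta = \partial \Delta^d$, whose palindromic $h$-vector $(1, 1, \dotsc, 1)$ forces $\overline{c}(\Delta) = 0$---or $a_0 \ge 1$ and $(1, a_1, \dotsc, a_{\lfloor (d-1)/2 \rfloor})$ is an $M$-vector. In the latter case, I would construct $\Delta$ with $a_0 + 1$ ears $\Sigma_1, \dotsc, \Sigma_{a_0+1}$ arranged so that a single ``primary'' ear carries all the higher-degree contributions to $\overline{c}$ while the remaining ears each contribute only to $\overline{c}_0$.

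For the primary ear, take $\Sigma_2 = \partial P$ for a simplicial $d$-polytope $P$ whose boundary decomposes as $\partial P = B_2 \cup B_2'$ into two shellable $(d-1)$-balls meeting along a $(d-2)$-sphere, with $h(B_2) = (1, a_1, \dotsc, a_{\lfloor (d-1)/2 \rfloor}, 0, \dotsc, 0)$. Take $\Sigma_1 = \partial Q$ for a simplicial $d$-polytope $Q \ne P$ whose boundary contains $B_2'$ as a sub-ball, and glue $\Sigma_2$ to $\Sigma_1$ along $B_2'$. The remaining ears $\Sigma_3, \dotsc, \Sigma_{a_0+1}$ are copies of $\partial \Delta^d$, each attached along a single $(d-1)$-simplex facet to the previous union; the new ball $B_q$ of each is $\partial \Delta^d$ minus a facet, with $h$-vector $(1, 1, \dotsc, 1, 0)$. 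Applying Chari's Lemma~\ref{lem:Charicompute}, and using that $\Sigma_1$ contributes palindromically to $h(\Delta; t)$,
\[
\overline{c}_q(\Delta) = \sum_{q' = 2}^{a_0 + 1} \bigl(h_q(B_{q'}) - h_{d-q}(B_{q'})\bigr).
\]
For $q \ge 1$, each simple ear contributes $1 - 1 = 0$ and the primary ear contributes $a_q - 0 = a_q$ (since $d - q > \lfloor (d-1)/2 \rfloor$ forces $h_{d-q}(B_2) = 0$), while for $q = 0$ each of the $a_0$ non-base ears contributes $1$. This yields $\overline{c}(\Delta) = a$.

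The main obstacle is producing the simplicial $d$-polytope $P$ with the prescribed hemispheric decomposition. I would address this via the Billera--Lee construction: take $P$ to be a simplicial polytope whose symmetric $h$-vector dominates the symmetrization of $(1, a_1, \dotsc, a_{\lfloor (d-1)/2 \rfloor}, 0, \dotsc, 0)$ entrywise---for example a polytope with plateau $h$-vector $(1, C, C, \dotsc, C, 1)$ arising from the $M$-vector $g = (1, C - 1, 0, \dotsc, 0)$ for any $C$ exceeding every $a_i$---and then exhibit a line shelling of $\partial P$ whose first $1 + a_1 + \dotsb + a_{\lfloor (d-1)/2 \rfloor}$ facets have restriction sizes $0, 1, \dotsc, \lfloor (d-1)/2 \rfloor$ with multiplicities $1, a_1, \dotsc, a_{\lfloor (d-1)/2 \rfloor}$, so the initial shellable segment is $B_2$ and its complement is $B_2'$. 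A secondary obstacle is exhibiting $Q \ne P$ with $\partial Q \supset B_2'$ so the gluing gives a valid convex ear decomposition; this is automatic in low dimensions via Steinitz's theorem, and in general can be arranged by ``placing'' constructions applied to the PL-ball $B_2'$. The most delicate part is demonstrating the requisite shelling flexibility on $\partial P$, which ultimately reduces to the Billera--Lee realization theorem for $M$-vectors.
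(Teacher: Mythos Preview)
Your high-level plan and your $\overline{c}$ computation via Lemma~\ref{lem:Charicompute} are correct, but the construction of the ``primary ear'' has a real gap. You need a simplicial $d$-polytope $P$ together with a hemisphere $B_2 \subset \partial P$ whose $h$-vector is exactly $(1, a_1, \dotsc, a_{\lfloor(d-1)/2\rfloor}, 0, \dotsc, 0)$. The Billera--Lee theorem produces polytopes with a prescribed $g$-vector, not polytopes carrying a prescribed initial shelling segment, and there is no standard result guaranteeing that a line shelling of a plateau-$h$-vector (i.e., stacked) polytope begins with facets of restriction sizes $0,1,\dotsc,\lfloor(d-1)/2\rfloor$ in the exact multiplicities $1, a_1, a_2, \dotsc$ you require. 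Your closing sentence that this ``ultimately reduces to the Billera--Lee realization theorem'' is the point where the argument is simply asserted rather than proved. (Incidentally, your secondary obstacle is a red herring: you may take $Q = P$ and glue two \emph{copies} of $\partial P$ along $B_2'$, exactly as the paper glues two copies of the same sphere; there is no need for a different polytope.)

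The paper sidesteps this obstacle by dropping one dimension. It invokes Billera--Lee to find the boundary $Q$ of a simplicial $(d-1)$-polytope with $g(Q) = (1,a_1,\dotsc,a_{\lfloor(d-1)/2\rfloor})$, then takes the suspension $Q * S^0$, which is automatically the boundary of a $d$-polytope (a bipyramid) and decomposes \emph{for free} into the two cones $B = Q * \{x\}$ and $B' = Q * \{x'\}$, each a $(d-1)$-ball with $h$-polynomial $h(Q;t)$. Gluing two copies of $Q * S^0$ along $B$ gives $\Delta'$ with $h(\Delta';t) = h(Q;t)(1 + 2t)$, and the palindromicity $h(Q;t) = t^{d-1} h(Q;t^{-1})$ immediately yields $t^d h(\Delta';t^{-1}) - h(\Delta';t) = h(Q;t)(1-t)$, hence $\overline{c}(\Delta') = g(Q) = \hat a$. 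The adjustment of $\overline{c}_0$ is then done, as in your argument, by attaching further simplex-boundary ears. Thus the suspension trick replaces your unproven shelling-flexibility claim with an elementary identity.
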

\begin{proof}

The zero vector is the complementary vector of any $(d - 1)$-dimensional simplicial complex $\Delta$ with $\beta_d = 1$, e.g., the boundary of a $d$-dimensional simplicial polytope.
Assume that $a = (a_0,\ldots,a_{\lfloor (d - 1)/2 \rfloor})$ is nonzero. Let $\hat{a} = (1,a_1, \ldots, a_{\lfloor (d - 1)/2 \rfloor})$. Then $a_0 > 0$ and $\hat{a}$ is an $M$-vector by Proposition~\ref{prop:summvec}.
There is a  $(d - 2)$-dimensional simplicial complex $Q$ that is the boundary of a $(d - 1)$-dimensional simplicial polytope
$\hat{a} = \overline{g}(Q)$ \cite{BLProofSufficiency}. Recall also that $h(Q;t) = t^{d - 1}h(Q;t^{-1})$. 
Let $S^0 = \{ x, x' \}$ be the $0$-dimensional sphere with two points.
Then the join $Q * S^0$ is a $(d - 1)$-dimensional simplicial sphere with $h(Q * S^0;t) = h(Q;t)(1 + t)$. The subcomplexes $B$ and $B'$ given by the joins $Q * \{ x \}$ and $Q * \{ x' \}$ respectively are $(d- 1)$-dimensional balls with $h$-vectors $h(B;t) = h(B';t) = h(Q;t)$. Let $\Delta'$ be obtained by gluing two copies of $Q * S^0$ along $B$.
Equivalently, $\Delta'$ is obtained by gluing a copy of $B'$ to $Q * S^0$ along $Q$. 
Then $\Delta'$ admits a convex ear decomposition by construction. By Lemma~\ref{lem:Charicompute},
$h(\Delta';t) = h(Q * S^0;t) + t^d h(B';t^{-1}) =  h(Q;t)(1 + 2t)$. 
It follows that $t^d h(\Delta';t^{-1}) - h(\Delta';t) 
= h(Q;t)(1 - t)$, and hence $\overline{c}(\Delta') = \overline{g}(Q) = \hat{a}$. 

Fix a facet $F$ of $\Delta'$. Let $\Delta$ be obtained by gluing $a_0$ copies of $\Delta'$ along the complement of the interior of $F$. Equivalently, $\Delta$ is obtained from $\Delta'$ by successively gluing $a_0 - 1$ copies of $F$ along the boundary of $F$.      Then $\Delta$ admits a convex ear decomposition by construction. Since $h(F;t) = 1$,  Lemma~\ref{lem:Charicompute} implies that $h(\Delta;t) = h(\Delta';t) + (a_0 - 1) t^d$. Hence 
$\overline{c}(\Delta) = \overline{c}(\Delta') + (a_0 - 1, 0,\ldots, 0) = a$. 
\end{proof}

In \cite[Question~7.2]{ATSymmetricDecompositions}, Athanasiadis and Tzanaki asked if the following  inequalities holds:  if $\Delta$ is a $(d - 1)$-dimensional doubly Cohen-Macaulay complex and $h(\Delta) = (h_0(\Delta), \ldots, h_d(\Delta))$, then  
\begin{equation}\label{eq:ATConjecture}
\frac{h_0(\Delta)}{h_d(\Delta)} \le \frac{h_1(\Delta)}{h_{d-1}(\Delta)} \le \cdots \le \frac{h_{d - 1}(\Delta)}{h_1(\Delta)} \le \frac{h_d(\Delta)}{h_0(\Delta)}. 
\end{equation}
As in the proof of Proposition~\ref{prop:construction}, let $Q$ be the boundary of a simplicial $(d - 1)$-dimensional polytope with symmetric $h$-vector $h(Q) = (h_0(Q), \ldots, h_{d-1}(Q))$.  Let $S^0 = \{ x, x' \}$ be the $0$-dimensional sphere. 
Let $\Delta$ be the $(d - 1)$-dimensional doubly Cohen-Macaulay complex obtained by gluing two copies of the suspension $Q * S^0$ of $Q$ along $Q * \{ x \}$. Then we have seen that $h(\Delta;t) = h(Q;t)(1 + 2t)$. Then one computes that 
\eqref{eq:ATConjecture} is equivalent to the condition that 
$h_{i - 1}(Q)h_{i + 1}(Q) \le   h_i(Q)^2$ for all  $i \ge 0$, i.e., $h(Q)$ is a log-concave sequence. The latter condition does not hold in general. 
We give a concrete example below.

\begin{example}\label{ex:AT}
With the notation above,
let $Q$ be the boundary of a simplicial $6$-dimensional polytope with 
$h(Q) = (1,10,13,17,13,10,1)$, which exists by  \cite{BLProofSufficiency}. 
Note that 	$h(Q)$ is not log-concave since $10*17 = 170 > 13^2 = 169$. 
Recall that $S^0 = \{ x, x' \}$ is the $0$-dimensional sphere. 
Let $\Delta$ be the $6$-dimensional doubly Cohen-Macaulay complex obtained by gluing two copies of the suspension $Q * S^0$ along $Q * \{ x \}$. Then $h(\Delta) 
= (1,12,33,43,47,36,21,2)$
and $\overline{c}(\Delta) = \overline{g}(Q) = (1,9,3,4)$. We have 
$\frac{h_2(\Delta)}{h_{5}(\Delta)} = \frac{33}{36} > \frac{h_3(\Delta)}{h_{4}(\Delta)} = \frac{43}{47}$, and so $\Delta$ does not satisfy \eqref{eq:ATConjecture}. 
\end{example}

The following example, which is due to Fabrizio Zanello, shows that the conclusion of Corollary~\ref{cor:complementaryvector} need not hold 
for the Hilbert function of finite dimensional  standard graded level algebras satisfying the conclusions of 	Corollary~\ref{cor:topheavy}. 

\begin{example}\label{ex:zanello}

We claim that there is a finite dimensional  standard graded level algebra with Hilbert function
$(1, 19, 19, 20, 19, 19)$. Assuming this claim, the conclusion of Corollary~\ref{cor:topheavy} holds, i.e.,  top heaviness holds and the corresponding $g$-vector $(1,18,0)$ is an $M$-vector. On the other hand, the corresponding complementary vector is $(18,0,1)$, which is not a sum of  $M$-vectors by Remark~\ref{rem:internalzeros}, so the conclusion of Corollary~\ref{cor:complementaryvector} fails.

It remains to establish the claim. We first recall the notion of a trivial extension due to Stanley \cite{StanleyHilbertGradedAlgebras}. 
Let $A$ be a finite dimensional  standard graded level algebra over a field $k$ with Hilbert function $(a_0,\ldots,a_d)$, with $a_d \neq 0$. Let $\omega_A$ be the graded dual of $A$, also known as the canonical module, c.f., the proof of Corollary~\ref{cor:complementaryvector}.
The degree $q$ component of $\omega_A$ is the dual of the vector space $A^{d-q}$. The \emph{trivial extension} is $B = A \oplus \omega_A[1]$. 	 
We regard $B$ as a $k$-graded algebra with multiplication defined by $(a,f) \cdot (a',f') = (a a', a \cdot f' + a' \cdot f)$ for all $a,a' \in A$ and $f,f' \in \omega_A[1]$. 	The hypothesis that $A$ is level is equivalent to condition that $\omega_A$ is generated in degree $0$ as an $A$-module, which implies that $B$ is standard graded. 
Moreover, multiplication induces a nondegenerate pairing $B^q \times B^{d - 1 - q} \to B^{d + 1} = \omega_A^{d}$, and hence $B$ is Gorenstein.  In particular, $B$ is level. Note that $B$ has Hilbert function 
$(a_0,\ldots,a_d,0) + (0,a_d,\ldots,a_0)$.

For a positive integer $m$, the \emph{truncation} of a standard graded algebra to degree  $m$ is the algebra obtained by quotienting by the ideal generated the elements of degree at least $m+1$. 
The truncation of a standard graded level algebra to any fixed degree is still a standard graded level algebra. 

Let $A$ be the truncation of  $k[x,y,z]/(yz^3,z^4)$ to degree $5$. Then $A$ is a standard graded level algebra with Hilbert function 
$(1, 3, 6, 10, 13, 16)$. Let $B$ be the trivial extension of $A$ with Hilbert function $(1, 3, 6, 10, 13, 16, 0) + (0, 16, 13, 10, 6, 3, 1) = (1, 19, 19, 20, 19, 19, 1)$. Then the truncation of $B$ to degree $5$  is a standard graded level algebra with Hilbert function
$(1, 19, 19, 20, 19, 19)$.

\end{example}

We now give two examples of level algebras where the conclusion of Corollary~\ref{cor:gelement} holds, but the conclusion  of Theorem~\ref{thm:gorensteinhilbert} may or may not hold. These algebras are level monomial algebra, and Hausel showed that the conclusion of Corollary~\ref{cor:gelement} holds for any level monomial algebra over a field of characteristic $0$ \cite[Theorem 6.3]{HauselQuaternionicMatroids}. See Section~\ref{ss:pure} for further discussion. 

\begin{example}\label{ex:monomial}

Let $k$ be a field and let 
$A = k[x,y,z]/(x^3, x^2y, x^2z,yz,y^2,z^2)$. This is a standard graded level algebra with Hilbert function $(1,3,3)$ and degree $2$ graded piece  $A^2 = k x^2 \oplus k xy \oplus k xz$.
If $\ell = x + y + z$, then 
multiplication by $\ell^2 = x^2 + 2xy + 2 xz$ induces an injection from $A^0$ to $A^2$. However, we claim that $A$ does not admit a Gorenstein quotient with Hilbert function $(1,3,1)$. 

To establish the claim, let  $\mu \colon A^2 \to k$ be a nonzero linear map, and assume that the corresponding Gorenstein quotient $A(\mu)$ has Hilbert function $(1,3,1)$. Then the projection map $\pi \colon A \to A(\mu)$ is an isomorphism in degree $1$. On the other hand, $\mu(xy)z - \mu(xz)y \in \ker(\pi)$, 
and hence $\mu(xy) = \mu(xz) = 0$. Then $y$ and $z$ lie in  the kernel of $\pi$, a contradiction.

\end{example}

\begin{example}\label{ex:monomial2}

Let $k$ be a field and let $A = k[x,y,z]/(x^3,x^2y,xy^2,xz,y^2,z^2)$. 
This is a standard graded level algebra with Hilbert function $(1,3,3)$ and degree $2$ graded piece  $A^2 = k x^2 \oplus k xy \oplus k yz$. 	Let $\mu \colon A^2 \to k$ be a linear map such that $\mu(x^2),\mu(xy),$ and $\mu(yz)$ are all nonzero.
We claim that the corresponding Gorenstein quotient has Hilbert function $(1,3,1)$.

To prove the claim, suppose that 
$ax + by + cz$ lies in the kernel of the map to the Gorenstein quotient, 
for some $a, b$, and $c$ in $k$. Then 
$\mu(z \cdot (ax + by + cz)) = b \mu(yz) = 0$, so $b = 0$. Also,
$\mu(x \cdot (ax + by + cz)) = a \mu(x^2) + b \mu(xy) = 0$, so $a = 0$. Finally,  
$\mu(y \cdot (ax + by + cz)) = a \mu(xy) + c \mu(yz) = 0$, so $c = 0$. 
\end{example}

\section{Generalizations and further questions}

In this section, we discuss various generalizations of our results, as well as questions for future research. 

\subsection{Arbitrary characteristic}\label{ss:positivechar}

Fix a field $k$.  In this section, we do not assume that $k$ has characteristic $2$. 
We let 
$\beta_{k,q} = \dim \tilde{H}_q(|\Delta|;k)$, and assume that $\beta_{k,d - 1} \neq 0$. 
Let $\overline{H}_k(\Delta)$ be the level quotient of the generic artinian reduction of the Stanley--Reisner ring of $\Delta$ over $k$, and similarly define $\overline{H}_k(\Delta, \mu)$ for $\mu \in \tilde{H}_{d-1}(|\Delta|, k)$.
Define the 	$\overline{h}_k$-vector $(\overline{h}_{k,0}, \ldots, \overline{h}_{k,d})$ to be the Hilbert function of $\overline{H}_k(\Delta)$, i.e., $\overline{h}_{k,q} := \dim \overline{H}_k^q(\Delta)$ for $0 \le i \le d$.  
The proof of Proposition~\ref{prop:kindep} implies that the $\overline{h}_k$-vector only depends on the characteristic of $k$.

The differential operators technique has been extended to the case when $k$ has any positive characteristic \cite{KLS,moment}, and this allows one to prove the following analogue of Theorem~\ref{thm:gorensteinhilbert} in other positive characteristics.

\begin{theorem}\label{thm:gorensteinhilbertcharp}
Let $\Delta$ be a simplicial complex of dimension $d-1$. Let $k$ be a field of characteristic $p > 0$, and assume that $\beta_{k,d - 1} \neq 0$. 
Then there is a map $\mu \colon \overline{H}^d_k(\Delta) \to K$ such that, for each $q \le d/p$, $\dim \overline{H}^q_k(\Delta, \mu) = \overline{h}_{k,q}$ and multiplication by $\ell^{d - pq}$ induces an injection from $\overline{H}^q_k(\Delta, \mu)$ to $\overline{H}^{d - (p-1)q}_k(\Delta, \mu)$.  If $k$ is infinite, then $\mu$ can be chosen to be defined over $k$. 
\end{theorem}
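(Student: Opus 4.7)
The plan is to mirror the proof of Theorem~\ref{thm:gorensteinhilbert}, replacing the characteristic~$2$ anisotropy of Proposition~\ref{prop:anistropycycle} with the characteristic~$p$ analogue proved via the differential operators technique in \cite{KLS,moment}. The expected form of that input is: if $\mu\colon \overline{H}^d_k(\Delta)\to K$ is nonzero and defined over $\mathbb{F}_p$ and $q\le d/p$, then the ``$p$-form'' $x\mapsto \mu(x^p\ell^{d-pq})$ is anisotropic on $\overline{H}^q_k(\Delta,\mu)$. Extracting this statement from \cite{KLS,moment}, via the cone-vertex reduction to the boundary of a simplex that underlies Proposition~\ref{prop:anistropycycle}, is where the real technical work lives. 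Once it is in hand, the desired injection is immediate for such $\mu$: if $x\cdot\ell^{d-pq}$ vanishes in $\overline{H}^{d-(p-1)q}_k(\Delta,\mu)$, then pairing against $x^{p-1}\in \overline{H}^{(p-1)q}_k(\Delta)$ (well-defined because $(p-1)q<d$) gives $\mu(x^p\ell^{d-pq})=0$, forcing $x=0$ in $\overline{H}^q_k(\Delta,\mu)$. The remaining task is to choose $\mu$ so that additionally $\dim \overline{H}^q_k(\Delta,\mu)=\overline{h}_{k,q}$.

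To that end I would rerun the induction from the proof of Theorem~\ref{thm:gorensteinhilbert}. Fix a basis $\mu_1,\dotsc,\mu_s$ of $\tilde{H}_{d-1}(|\Delta|,\mathbb{F}_p)$ and, for each $\nu\in\Hom(\overline{H}^d_k(\Delta),K)$ and each $q\le d/p$, define
\[
\phi_\nu\colon \overline{H}^q_k(\Delta)\longrightarrow \Hom\bigl(\overline{H}^{(p-1)q}_k(\Delta),K\bigr), \qquad \phi_\nu(x)(w)=\nu(xw\,\ell^{d-pq}),
\]
which is $K$-linear in $\nu$. The kernel of $\overline{H}^q_k(\Delta)\to\overline{H}^q_k(\Delta,\nu)$ is always contained in $\ker\phi_\nu$; the anisotropy above yields the reverse inclusion whenever $\nu$ is defined over $\mathbb{F}_p$. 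Inductively I construct $\mu=\sum_{r=1}^t\lambda_r\mu_r$ with the property that, for every $q\le d/p$, $\ker\phi_\mu$ coincides with the kernel of $\overline{H}^q_k(\Delta)\to\bigoplus_{r=1}^t\overline{H}^q_k(\Delta,\mu_r)$. The inductive step applies Lemma~\ref{lem:perturbation} to $\alpha=\phi_\mu$ and $\beta=\phi_{\mu_{t+1}}$; its hypothesis reduces to the claim
\[
\phi_{\mu_{t+1}}(\ker\phi_\mu)\cap\im\phi_\mu=\{0\}.
\]
After $s$ steps the joint kernel collapses to zero because the $\mu_r$ span the dual of $\overline{H}^d_k(\Delta)$, giving $\dim \overline{H}^q_k(\Delta,\mu)=\overline{h}_{k,q}$; infiniteness of $k$ lets the $\lambda_r$ be chosen inside $k$ at every step.

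The verification of the claim is where the $p$-th power supplants the square. If $x\in\ker\phi_\mu$ and $\phi_{\mu_{t+1}}(x)=\phi_\mu(y)$, evaluating both sides at $w=x^{p-1}$ gives $\mu_{t+1}(x^p\,\ell^{d-pq})=\mu(y\,x^{p-1}\,\ell^{d-pq})$. The right-hand side equals $\phi_\mu(x)(y\,x^{p-2})$, using $y\,x^{p-2}\in\overline{H}^{(p-1)q}_k(\Delta)$ with the convention $x^{0}=1$ when $p=2$, and hence vanishes because $x\in\ker\phi_\mu$. Therefore $\mu_{t+1}(x^p\,\ell^{d-pq})=0$, and the characteristic~$p$ anisotropy pushes $x$ into the kernel of $\overline{H}^q_k(\Delta)\to\overline{H}^q_k(\Delta,\mu_{t+1})$, which coincides with $\ker\phi_{\mu_{t+1}}$ for the same reason. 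The genuine obstacle is pinning down the characteristic~$p$ anisotropy with the correct normalization and degree range from \cite{KLS,moment}; the perturbation argument, the degree bookkeeping $pq\le d$ and $(p-1)q<d$ enforced by $q\le d/p$, and the final Hilbert-function identification are then direct transpositions of the characteristic~$2$ proof.
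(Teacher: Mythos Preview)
Your proposal is correct and follows essentially the same route as the paper. The only cosmetic difference is your choice of target for $\phi_\nu$: you map into $\Hom(\overline{H}^{(p-1)q}_k(\Delta),K)$, whereas the paper maps into the space of $(p-1)$-multilinear forms on $\overline{H}^q_k(\Delta)$; since products of degree-$q$ classes span degree $(p-1)q$, the two maps have identical kernels, and your claim verification (evaluating at $w=x^{p-1}$ and then rewriting $\mu(yx^{p-1}\ell^{d-pq})=\phi_\mu(x)(yx^{p-2})$) is exactly the paper's substitution $z_1=\cdots=z_{p-1}=x$ followed by $z_1=y,\,z_2=\cdots=z_{p-1}=x$.
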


One can then use the techniques of the paper to deduce properties of complementary vectors, but only for the first $\lfloor d/p \rfloor + 1$ entries.

On the other hand, the anisotropy of the Hodge--Riemann forms on $\overline{H}(\Delta, \mu)$ is not known to hold if $k$ does not have characteristic $2$. 
If the anisotropy of Hodge--Riemann forms was known over fields of characteristic $p$, then the techniques of this paper would show that
the bound $q \le d/p$ in Theorem~\ref{thm:gorensteinhilbertcharp} could be replaced by $q \le d/2$. This would allow one to extend our results on complementary vectors. For 
example, we would conclude that the complementary vectors of complexes which are doubly Cohen--Macaulay in characteristic $p$ are sums of $M$-vectors. 

We outline the proof of Theorem~\ref{thm:gorensteinhilbertcharp}. We use the following generalization of  Proposition~\ref{prop:anistropycycle}. It is a minor modification of \cite[Theorem~1.3]{KLS}, obtained just as in the proof of Proposition~\ref{prop:anistropycycle}.

\begin{proposition}\label{prop:anistropycyclecharp}
Let $k$ be a field of characteristic $p$. 
Let $\mu \colon \overline{H}^d_k(\Delta) \to K$ be a nonzero map which is defined over $\mathbb{F}_p$, and let $\overline{H}_k(\Delta, \mu)$ be the corresponding Gorenstein quotient. Let $\ell = x_1 + \dotsb + x_n \in \overline{H}^1_k(\Delta, \mu)$. 
 For each $q \le d/p$ and any nonzero $g \in \overline{H}^q_k(\Delta)$, 
$\ell^{d - pq} \cdot g^p$ is nonzero.
\end{proposition}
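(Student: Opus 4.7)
The plan is to follow the three-step strategy used in the proof of Proposition~\ref{prop:anistropycycle}, but with the characteristic-two input (the differential-operators lemma underlying \cite[Theorem 4.6]{KaruXiaoAnisotropy}) replaced by its characteristic-$p$ analogue \cite[Theorem 1.3]{KLS}. The statement we must prove is semilinear in $g$ rather than bilinear, so the main work is to check that the usual reduction steps interact correctly with the Frobenius.

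First, I would lift $\mu$ to a linear functional $\tilde{\mu}$ on the degree-$d$ part of the polynomial ring $K[x_1,\dotsc,x_n]$ by pulling back through the surjection to $\overline{H}^d_k(\Delta)$; it vanishes on the Stanley--Reisner relations and on multiples of $\theta_1,\dotsc,\theta_d$. Following \cite[Section 2.6]{KaruXiaoAnisotropy}, I would then introduce a cone vertex $v_0$ and enlarge the complex; after this reduction, $\tilde{\mu}$ can be written as an $\mathbb{F}_p$-linear combination $\tilde{\mu}=\sum_\sigma c_\sigma\, \tilde{\mu}_{\partial\sigma}$ of functionals corresponding to the boundaries of $d$-simplices $\sigma$, with $c_\sigma\in\mathbb{F}_p$. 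The hypothesis that $\mu$ is defined over $\mathbb{F}_p$ is exactly what makes this decomposition possible inside the $\mathbb{F}_p$-structure.

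Next, in characteristic $p$ the Frobenius gives $(x+y)^p=x^p+y^p$ and, more generally, any $\mathbb{F}_p$-linear combination commutes with the $p$-th power map. Consequently
\[
\tilde{\mu}\!\left(\ell^{d-pq}\cdot g^p\right)=\sum_{\sigma} c_\sigma\, \tilde{\mu}_{\partial\sigma}\!\left(\ell^{d-pq}\cdot g^p\right),
\]
and the semilinear form $g\mapsto \tilde{\mu}(\ell^{d-pq}g^p)$ decomposes over the simplex boundaries $\partial\sigma$ in exactly the way needed. I would then adapt the argument of \cite[Lemma 4.5]{KaruXiaoAnisotropy}: if $\tilde{\mu}(\ell^{d-pq}g^p)=0$ for some nonzero $g$, write $g$ in a carefully chosen monomial basis and use the simplex-by-simplex non-vanishing to force $g=0$ in $\overline{H}^q_k(\Delta)$.

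Finally, the simplex case itself is the content of \cite[Theorem 1.3]{KLS}: for the boundary of a single $d$-simplex, the differential-operators technique (partial derivatives in the parameters $a_{i,j}$, combined with the explicit formula $\mu_{\partial\sigma}(x^F)=1/[F]$) shows that $\ell^{d-pq}\cdot g^p$ is nonzero for every nonzero $g$ in degree $q\le d/p$. The main obstacle is the second step, since the form we study is semilinear rather than bilinear, and one must check that the $\mathbb{F}_p$-linear decomposition of $\mu$ together with Frobenius additivity really does let the simplex-case anisotropy propagate to arbitrary $\Delta$; this is where the hypothesis that $\mu$ is defined over $\mathbb{F}_p$ and the restriction $q\le d/p$ (ensuring that the relevant degree does not exceed $d$ after taking $p$-th powers) are both essential.
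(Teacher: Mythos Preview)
Your proposal is correct and follows essentially the same route as the paper: the paper states that Proposition~\ref{prop:anistropycyclecharp} is a minor modification of \cite[Theorem~1.3]{KLS}, obtained ``just as in the proof of Proposition~\ref{prop:anistropycycle}'', i.e., by the cone-vertex reduction of \cite[Section~2.6]{KaruXiaoAnisotropy}, the reduction argument of \cite[Lemma~4.5]{KaruXiaoAnisotropy}, and the simplex case. One small remark: the displayed decomposition $\tilde{\mu}(\ell^{d-pq}g^p)=\sum_\sigma c_\sigma\,\tilde{\mu}_{\partial\sigma}(\ell^{d-pq}g^p)$ is just linearity of $\tilde{\mu}$ and does not use Frobenius; the Frobenius additivity $g\mapsto g^p$ is what makes the semilinear form behave well enough for the Lemma~4.5-style reduction to go through, so you may want to relocate that emphasis.
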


\begin{proof}[Proof of Theorem~\ref{thm:gorensteinhilbertcharp}]
The proof of Theorem~\ref{thm:gorensteinhilbert} extends to a proof of Theorem~\ref{thm:gorensteinhilbertcharp}, which two modifications.

Firstly,  for some $q \le d/p$, we let $V = \overline{H}^q_k(\Delta)$ and let $W$ be the $K$-vector space of $(p - 1)$-multilinear maps from $V$ to $K$, i.e., the tensor product of $\operatorname{Hom}(\overline{H}^q_k(\Delta), K)$ with itself $p - 1$ times. For $\nu \in \operatorname{Hom}(\overline{H}^d_k(\Delta), K)$, define 
$\phi_{\nu} \colon V \to W$ to be the map given by $\phi_{\nu}(x) = ((y_1,\ldots,y_{p - 1}) \mapsto \nu(x \cdot y_1 \cdots y_{p - 1} \cdot \ell^{d - pq}))$. Multiplication by $\ell^{d - pq}$ induces an injection from $\overline{H}^q_k(\Delta, \nu) \to \overline{H}^{d - (p-1)q}_k(\Delta, \nu)$ if and only if   
$\ker(\phi_{\nu})$, which is $\{x \in \overline{H}^q_k(\Delta) : \nu(x \cdot  y_1 \cdots y_{p - 1} \cdot \ell^{d - pq}) = 0 \text{ for all }  y_1, \ldots, y_{p - 1} \in \overline{H}^q_k(\Delta)\}$,
coincides with the kernel of the map $\overline{H}^q_k(\Delta) \to \overline{H}^q_k(\Delta, \nu)$. 

Secondly, the proof of \eqref{eq:phi} is modified as follows: let $x$ be a class in $\ker(\phi_{\mu})$ such that there is some $y \in \overline{H}^q_k(\Delta)$ with $\phi_{\mu_{t+1}}(x) = \phi_{\mu}(y)$.
For any $z_1,\ldots,z_{p - 1} \in \overline{H}^q_k(\Delta)$, we have
$$\mu_{t+1}(x \cdot z_1 \cdots z_{p - 1} \cdot \ell^{d - pq}) = \mu(y \cdot z_1 \cdots z_{p - 1} \cdot \ell^{d - pq}).$$
Set $z_1 = \cdots = z_{p - 1} = x$ in the above equation. As $x \in \ker(\phi_{\mu})$, we have $\mu(y \cdot x^{p - 1} \cdot \ell^{d - pq}) = 0$. Therefore $\mu_{t+1}(x^p \cdot \ell^{d - pq}) = 0$. Proposition~\ref{prop:anistropycyclecharp} then implies that $x$ lies in the kernel of $\overline{H}^q_k(\Delta) \to \overline{H}^q_k(\Delta, \mu_{t+1})$, which is the kernel of $\phi_{\mu_{t+1}}$. 
\end{proof}

In some cases, we can deduce an analogue of Theorem~\ref{thm:gorensteinhilbert} when $k$ has characteristic $0$. The hypothesis of the following theorem holds, for example, when $\Delta$ is doubly Cohen--Macaulay over a field of characteristic $2$, which implies that it is doubly Cohen--Macaulay over $k$.

\begin{theorem}\label{thm:char0}
Let $\Delta$ be a simplicial complex of dimension $d-1$. Let $k$ be a field of characteristic $0$, and assume that $\beta_{k,d - 1} \neq 0$ and that $\dim \overline{H}^q_{k}(\Delta) = \dim \overline{H}^q_{{\mathbb{F}_2}}(\Delta)$ for all $q$. 
Then there is a map $\mu \colon \overline{H}^d_k(\Delta) \to K$ which is defined over $k$ such that $\dim \overline{H}^q_k(\Delta, \mu) = \overline{h}_{k,q}$ for each $q \le d/2$. 
\end{theorem}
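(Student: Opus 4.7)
The plan is to transport Theorem~\ref{thm:gorensteinhilbert} from characteristic $2$ to characteristic $0$ by a reduction-mod-$2$ argument: the condition $\dim \overline{H}^q_k(\Delta, \mu) = \overline{h}_{k,q}$ reduces to the non-vanishing of a polynomial in the coordinates of $\mu$, and I will show this polynomial is non-zero in characteristic $0$ because its reduction modulo $2$ is non-zero.

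First I select integral cycles. The equality $\dim \overline{H}^d_k(\Delta) = \dim \overline{H}^d_{\mathbb{F}_2}(\Delta)$ translates to $\beta_{k,d-1} = \beta_{\mathbb{F}_2,d-1}$; by the universal coefficient theorem this forces the $2$-primary torsion subgroups of both $\tilde{H}_{d-1}(|\Delta|;\mathbb{Z})$ and $\tilde{H}_{d-2}(|\Delta|;\mathbb{Z})$ to vanish. Hence I may choose cycles $\mu_1, \dotsc, \mu_s \in \tilde{H}_{d-1}(|\Delta|;\mathbb{Z})$ whose images form a basis of $\tilde{H}_{d-1}(|\Delta|;L)$ simultaneously for $L = k$ and $L = \mathbb{F}_2$. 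With this choice, $\mu = \sum_i c_i \mu_i$ is defined over $k$ precisely when each $c_i$ lies in $k$.

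For each $q \le d/2$ I consider the bilinear pairing $B_\mu(x,y) = \mu(x \cdot y)$ on $H^q(\Delta) \times H^{d-q}(\Delta)$ expressed in the monomial spanning sets; its matrix has entries $\sum_i c_i \mu_i(x^I x^J)$ given by universal formulas in $(a_{i,j})$ and $(c_i)$. After inverting the facet determinants $[F]$ together with the finitely many sub-determinants of $(a_{i,j})$ required to reduce monomials modulo the $\theta$-ideal, each entry lies in $R[c_1, \dotsc, c_s]$ for a fixed ring $R = \mathbb{Z}[a_{i,j}][d_1^{-1}, \dotsc, d_r^{-1}]$, and none of the $d_l$ vanishes modulo $2$, since each is a determinant with a monomial term of coefficient $\pm 1$. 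The map $\phi_\mu$ factors through $\overline{H}^q_k(\Delta, \mu)$, so the rank of this matrix equals $\dim \overline{H}^q_k(\Delta, \mu)$ and is at most $\overline{h}_{k,q}$; the condition $\dim \overline{H}^q_k(\Delta, \mu) = \overline{h}_{k,q}$ therefore reduces to the non-vanishing of some $\overline{h}_{k,q} \times \overline{h}_{k,q}$ minor $D_q^{(\alpha)}(c_1, \dotsc, c_s) \in R[c_1, \dotsc, c_s]$.

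The heart of the argument is now brief. By Theorem~\ref{thm:gorensteinhilbert} applied over $\mathbb{F}_2$, for each $q \le d/2$ the above matrix attains rank $\overline{h}_{\mathbb{F}_2, q} = \overline{h}_{k,q}$ at some specialization of the $c_i$ in $\mathbb{F}_2(a_{i,j})$, so some minor $D_q^{(\alpha_q)}$ has non-zero reduction modulo $2$ and must itself be a non-zero element of $R[c_1, \dotsc, c_s]$, hence a non-zero element of $k[a_{i,j}][d_l^{-1}][c_1, \dotsc, c_s] \subseteq K[c_1, \dotsc, c_s]$. A routine induction on $s$, using that $k$ is infinite, produces $(c_1, \dotsc, c_s) \in k^s$ at which the product $\prod_{q \le d/2} D_q^{(\alpha_q)}$ is non-zero. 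The associated $\mu = \sum_i c_i \mu_i$ is defined over $k$ and achieves $\dim \overline{H}^q_k(\Delta, \mu) = \overline{h}_{k,q}$ for every $q \le d/2$. The principal technical obstacle in this plan is the compatibility step: the reduction-mod-$2$ deduction would be meaningless without a common integral basis for the cycles, and it is precisely the dimension hypothesis that forces the $2$-primary torsion vanishing needed for such a basis to exist.
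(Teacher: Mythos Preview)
Your argument is correct and follows essentially the same strategy as the paper: transport Theorem~\ref{thm:gorensteinhilbert} from characteristic $2$ to characteristic $0$ via a specialization argument, using the dimension hypothesis together with the universal coefficient theorem to guarantee that integral cycles reduce compatibly.

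The execution differs slightly. The paper first reduces to proving the statement over a single characteristic-$0$ field, then applies Theorem~\ref{thm:gorensteinhilbert} over $\bar{\mathbb{F}}_2$ to obtain a cycle $\overline{\mu}$ defined over some $\mathbb{F}_{2^n}$, chooses a number field $k$ with $\mathbb{F}_{2^n}$ as a residue field, and lifts $\overline{\mu}$ to $\tilde{H}_{d-1}(|\Delta|;\mathcal{O}_k)$; the rank of the pairing matrix can only drop under reduction, so the lift works. You instead phrase the condition as the nonvanishing of minor polynomials $D_q^{(\alpha)} \in R[c_1,\dotsc,c_s]$, observe that Theorem~\ref{thm:gorensteinhilbert} over $\mathbb{F}_2$ (with $c_i$ allowed in $\mathbb{F}_2(a_{i,j})$) makes their mod-$2$ reductions nonzero, and then specialize the $c_i$ into $k$. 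Your route avoids the detour through $\bar{\mathbb{F}}_2$ and the auxiliary number field, at the cost of setting up the ring $R$ explicitly; the paper's route is more concrete about the lift. Both rest on the same two ingredients: the freeness of $\tilde{H}_{d-1}(|\Delta|;\mathbb{Z})$ together with the vanishing of $2$-torsion in $\tilde{H}_{d-2}$ forced by the hypothesis, and the semicontinuity of matrix rank under reduction modulo $2$.
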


\begin{proof}
Choose a basis $\mu_1, \dotsc, \mu_t$ for $\tilde{H}_{d-1}(|\Delta|, \mathbb{Q})$.  Then for $a_1, \dotsc, a_t \in \mathbb{Q}$, the dimension of $\overline{H}^{q}_{\mathbb{Q}}(\Delta, \sum a_i \mu_i)$ is the rank of a certain matrix computing the pairing between monomials of degree $q$ and $d - q$. The entries of this matrix are polynomials in $a_1, \dotsc, a_t$ with rational coefficients, and so if there is some field $k$ of characteristic $0$ for which the conclusion of the theorem holds, then it holds for every field of characteristic $0$. 

The assumption that $\dim \overline{H}^d_{k}(\Delta) = \dim \overline{H}^d_{{\mathbb{F}_2}}(\Delta)$ and the universal coefficient theorem for homology implies that $\dim \tilde{H}_{d-1}(|\Delta|, \mathbb{Q}) = \dim \tilde{H}_{d-1}(|\Delta|, \mathbb{F}_2)$. By Theorem~\ref{thm:gorensteinhilbert}, we can find a cycle $\overline{\mu} \in \tilde{H}_{d-1}(|\Delta|, \bar{\mathbb{F}}_2)$ for which the Hilbert function of the corresponding Gorenstein quotient agrees with $\dim \overline{H}^q_k(\Delta)$ for $q \le d/2$. 
Here $\bar{\mathbb{F}}_2$ denotes the algebraic closure of $\mathbb{F}_2$. 
As $\bar{\mathbb{F}}_2$ is a union of finite field, $\mu$ is defined over $\mathbb{F}_{2^n}$ for some $n$. 

Let $k$ be a number field whose ring of integers $\mathcal{O}_k$ has $\mathbb{F}_{2^n}$ as a residue field. Then there is a cycle $\mu \in \tilde{H}_{d-1}(|\Delta|, \mathcal{O}_k)$ which specializes to $\overline{\mu}$. This implies that the rank of the matrix which computes the pairing between monomials of degree $q$ and $d-q$ using $\mu$ is $\dim \overline{H}^q_k(\Delta)$,  as the matrix has this rank after reducing to $\mathbb{F}_{2^n}$. That is also the rank of this matrix over $k$, so $\dim \overline{H}^q_k(\Delta, \mu) = \dim \overline{H}^q_{\mathbb{F}_{2^n}}(\Delta, \mu) = \dim \overline{H}^q_k(\Delta)$, as desired.
\end{proof}

\begin{remark}
	In the statement of Theorem~\ref{thm:char0}, a standard argument can be used to show that $\mu$ may be chosen so that $\overline{H}_k(\Delta, \mu)$ satisfies the strong Lefschetz theorem, see, e.g., \cite[Proof of Theorem 1.4]{LNS}. 
\end{remark}

\subsection{Balanced complexes}\label{ss:balanced}

In \cite[Theorem~7.6]{Oba}, Oba generalized Proposition~\ref{prop:anistropycycle} to the case of balanced complexes. We explain how this result can be used to extend the results of the paper to the balanced complex setting.

Let $\Delta$ be a simplicial complex of dimension $d-1$ with vertex set $V = \{ 1,\ldots, n \}$. Let $\textbf{a} = (a_1,\ldots,a_m)$ be a sequence of positive integers with $\sum_q a_q = d$. Recall that an $m$-coloring of the vertices of $\Delta$ is a function  $\kappa \colon V \to \{ 1,\ldots, m\}$ such that $\kappa(v) \neq \kappa(w)$ for any edge $\{ v,w \}$ in $\Delta$. 
An \emph{$\textbf{a}$-balanced complex} is a pair  $(\Delta, \kappa)$, where $\kappa \colon V \to \{ 1,\ldots, m\}$ is a coloring of the vertices of $\Delta$, and for each face $F$ of $\Delta$, $|F \cap \kappa^{-1}(q)| \le a_j$ for $1 \le q \le m$.  

Define a function $\psi \colon \{ 1,\ldots, d \} \to \{ 1,\ldots, m \}$ by setting
$\psi(i) = q $ for $i \in \{\sum_{l < q} a_l + 1, \ldots, \sum_{l < q} a_l + a_q\}$. 
Let $k$ be a field of characteristic $2$, and consider a purely transcendental field extension $K = k(a_{i,j})$, where the indices $(i,j)$ run over all 
$1 \le i \le d$ and $1 \le j \le n$ such that $\psi(i) = \kappa(j)$.
Recall that $K[\Delta]$ denotes the Stanley--Reisner ring of $\Delta$. Let $\theta_i = \sum_{\psi(i) = \kappa(j)} a_{i,j} x_j$ for $i \in \{1, \dotsc, d\}$, and define $H(\Delta, \kappa) \coloneqq K[\Delta]/(\theta_1, \dotsc, \theta_d)$. 
Then $H(\Delta, \kappa) = \oplus_{\textbf{b} \le \textbf{a}} H^{\textbf{b}}(\Delta, \kappa)$ naturally inherits an $\N^m$-grading that refines the usual $\N$-grading. Here $\textbf{b} = (b_1, \ldots , b_m) \in \N^m$, and we write $\textbf{b}  \le \textbf{a}$ if  $b_q \le a_q$ for $1 \le q \le m$. 
We assume that $H^d(\Delta, \kappa) = H^{\textbf{a}}(\Delta, \kappa)$ is nonzero. 
There is a natural identification of $H^{\textbf{a}}(\Delta, \kappa)$ with $\tilde{H}^{d-1}(|\Delta|, K)$. 
We may consider the $\N^m$-graded level quotient $\overline{H}^d(\Delta, \kappa)$, and, for each nonzero map $\mu \colon \overline{H}^d(\Delta, \kappa) \to K$, the $\N^m$-graded Gorenstein quotient  $\overline{H}(\Delta, \kappa, \mu)$. 
Let $\ell_q = \sum_{\kappa(j) = q} x_j \in \overline{H}^{e_q}(\Delta, \kappa)$ for $1 \le q \le m$, where $e_q \in \N^m$ denotes the $q$th standard basis vector. Define 
$\boldsymbol{\ell}^{\textbf{b}}:= \prod_q \ell_q^{b_q} \in \overline{H}^{\textbf{b}}(\Delta, \kappa)$. Then the following result is essentially  the same as \cite[Theorem 7.6]{Oba}.

\begin{proposition}\label{thm:SLPOba}

Let $(\Delta, \kappa)$ be a balanced complex with $\dim \Delta = d - 1$.  Let $k$ be a field of characteristic $2$, and let $\mu \colon \overline{H}^d(\Delta, \kappa) \to K$ be a nonzero map which is defined over $\mathbb{F}_2$. Then for any $\textbf{b} \in \N^m$ with $2\textbf{b} \le \textbf{a}$, the symmetric bilinear form $\overline{H}^{\textbf{b}}(\Delta, \kappa) \times \overline{H}^{\textbf{b}}(\Delta, \kappa) \to K$ given by 
$(x, y) \mapsto \mu(x \cdot y \cdot \boldsymbol{\ell}^{\textbf{a} - 2\textbf{b}})$ is anisotropic. 
\end{proposition}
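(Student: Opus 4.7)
The plan is to mirror the strategy of Proposition~\ref{prop:anistropycycle}, but working in the $\N^m$-graded setting and invoking \cite[Theorem 7.6]{Oba} in place of the differential operator results of \cite{KaruXiaoAnisotropy}. First I would reformulate the desired anisotropy as a statement about the pairing on $H^{\textbf{b}}(\Delta,\kappa)$ itself, before quotienting out to $\overline{H}(\Delta,\kappa,\mu)$. Once anisotropy is established on the ambient space, it descends to $\overline{H}^{\textbf{b}}(\Delta,\kappa,\mu)$ automatically: by definition the kernel of the quotient map in degree $\textbf{b}$ is precisely the radical of the form $(x,y) \mapsto \mu(x \cdot y \cdot \boldsymbol{\ell}^{\textbf{a}-2\textbf{b}})$, so the induced form on the quotient is nondegenerate and inherits anisotropy.

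The heart of the argument is the balanced analogue of \cite[Lemma 4.5]{KaruXiaoAnisotropy}. Using a coloring-compatible version of the cone vertex trick of \cite[Section 2.6]{KaruXiaoAnisotropy} --- one adjoins an auxiliary vertex of each color and cones off $\Delta$ --- the $\mathbb{F}_2$-rational functional $\mu$ on the degree $\textbf{a}$ part gets decomposed as an $\mathbb{F}_2$-linear combination of facet functionals $\mu_F$, one for each balanced $d$-facet $F$ of color type $\textbf{a}$. In characteristic $2$, the quadratic form $q_{\mu}(x) = \mu(x^2 \cdot \boldsymbol{\ell}^{\textbf{a}-2\textbf{b}})$ is then the $\mathbb{F}_2$-linear combination of the forms $q_{\mu_F}$; since the generic parameters $a_{i,j}$ appearing in each $q_{\mu_F}$ are algebraically independent across different facets, $q_{\mu}$ vanishes on a class $x$ only if each $q_{\mu_F}$ does. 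This reduces anisotropy for general $\mu$ to anisotropy for the boundary of a single balanced simplex, which is precisely the case handled by Oba via explicit balanced differential operator computations.

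The main obstacle I anticipate lies in the bookkeeping of the multicolored cone vertex reduction: because the auxiliary vertices must respect $\kappa$, one must verify that after coning, the restricted parameter set $(a_{i,j})_{\psi(i)=\kappa(j)}$ still carries enough algebraic independence to isolate the contribution of each facet in the Frobenius/squaring argument. Once this is checked, combining it with Oba's base case and the descent to the Gorenstein quotient finishes the proof.
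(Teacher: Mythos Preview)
The paper does not give a proof here: it simply records the proposition as essentially identical to \cite[Theorem~7.6]{Oba}, which is already the full balanced anisotropy statement, not merely the simplex base case. Your sketch is thus reconstructing Oba's argument by transplanting the Karu--Xiao reduction to the $\N^m$-graded setting --- a reasonable plan, and indeed close to how Oba proceeds.

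One point deserves correction. Your reason for isolating the facet contributions --- that the parameters $a_{i,j}$ appearing in distinct $q_{\mu_F}$ are ``algebraically independent across different facets'' --- is false as stated: distinct facets of $\Delta$ share vertices and therefore share parameters, and the (finitely many) added cone vertices are common to many facets as well. The mechanism in \cite[Lemma~4.5]{KaruXiaoAnisotropy} (and its balanced analogue) is more delicate; roughly, one exploits the parameters attached to the cone vertex to control poles or leading terms and thereby peel off the facet contributions one at a time. Your closing paragraph rightly flags this reduction as the crux, but the heuristic you give for why it succeeds is misleading and would need to be replaced by the genuine argument.
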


Define the $\overline{h}$-vector of $(\Delta,\kappa)$ by 
$\overline{h}_{\textbf{b}} := \dim \overline{H}^{\textbf{b}}(\Delta, \kappa)$ for $\textbf{b} \le \textbf{a}$. As in Proposition~\ref{prop:kindep}, this is independent of the choice of characteristic $2$ field $k$.  
A minor modification of the 
proof of Theorem~\ref{thm:gorensteinhilbert} implies the following result. 

\begin{theorem}\label{thm:gorensteinhilbertbalanced}
Let $(\Delta, \kappa)$ be a balanced complex with $\dim \Delta = d - 1$. 
Let $\Delta$ be a simplicial complex of dimension $d-1$. Let $k$ be a field of characteristic $2$, and assume that $\beta_{d - 1} \neq 0$. 
Then there is a map $\mu \colon \overline{H}^d(\Delta, \kappa) \to K$ such that, for each $\textbf{b} \in \N^m$ with $2\textbf{b} \le \textbf{a}$, $\dim \overline{H}^{\textbf{b}}(\Delta, \kappa,\mu) = \overline{h}_{\textbf{b}}$ and multiplication by $\boldsymbol{\ell}^{\textbf{a} - 2\textbf{b}}$ induces an isomorphism from $\overline{H}^{\textbf{b}}(\Delta, \kappa,\mu)$ to $\overline{H}^{\textbf{a} - \textbf{b}}(\Delta, \kappa,\mu)$. 
If $k$ is infinite, then $\mu$ can be chosen to be defined over $k$.
\end{theorem}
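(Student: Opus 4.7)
The plan is to mirror the proof of Theorem~\ref{thm:gorensteinhilbert} in the $\N^m$-graded setting, replacing the single-degree index $q \le d/2$ with multi-degrees $\mathbf{b} \in \N^m$ satisfying $2\mathbf{b} \le \mathbf{a}$, and replacing the use of Proposition~\ref{prop:anistropycycle} with Proposition~\ref{thm:SLPOba}. Fix a basis $\mu_1, \ldots, \mu_s$ of $\Hom(\overline{H}^d(\Delta, \kappa), K) = \tilde{H}_{d-1}(|\Delta|, K)$ consisting of cycles defined over $\mathbb{F}_2$. By induction on $t$, I would construct scalars $\lambda_1, \ldots, \lambda_t \in K$ such that, setting $\mu = \sum_{p=1}^t \lambda_p \mu_p$, for every $\mathbf{b}$ with $2\mathbf{b} \le \mathbf{a}$ the kernel of $\overline{H}^{\mathbf{b}}(\Delta, \kappa) \to \overline{H}^{\mathbf{b}}(\Delta, \kappa, \mu)$ coincides with the kernel of the diagonal map into $\bigoplus_{p=1}^{t} \overline{H}^{\mathbf{b}}(\Delta, \kappa, \mu_p)$, and multiplication by $\boldsymbol{\ell}^{\mathbf{a} - 2\mathbf{b}}$ is an isomorphism $\overline{H}^{\mathbf{b}}(\Delta, \kappa, \mu) \to \overline{H}^{\mathbf{a} - \mathbf{b}}(\Delta, \kappa, \mu)$. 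Since $\overline{H}^{\mathbf{b}}(\Delta, \kappa)$ injects into $\bigoplus_{p=1}^s \overline{H}^{\mathbf{b}}(\Delta, \kappa, \mu_p)$, the case $t = s$ yields the theorem.

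For the inductive step, given $\nu \colon \overline{H}^d(\Delta, \kappa) \to K$ and $\mathbf{b}$ with $2\mathbf{b} \le \mathbf{a}$, define the $K$-linear map $\phi_{\nu, \mathbf{b}} \colon \overline{H}^{\mathbf{b}}(\Delta, \kappa) \to \Hom(\overline{H}^{\mathbf{b}}(\Delta, \kappa), K)$ by $\phi_{\nu, \mathbf{b}}(x)(y) = \nu(x \cdot y \cdot \boldsymbol{\ell}^{\mathbf{a} - 2\mathbf{b}})$. The Gorenstein duality between $\overline{H}^{\mathbf{b}}(\Delta, \kappa, \nu)$ and $\overline{H}^{\mathbf{a} - \mathbf{b}}(\Delta, \kappa, \nu)$ makes the desired strong Lefschetz isomorphism at $\mathbf{b}$ equivalent to the condition that $\ker(\phi_{\nu, \mathbf{b}})$ equals the kernel of $\overline{H}^{\mathbf{b}}(\Delta, \kappa) \to \overline{H}^{\mathbf{b}}(\Delta, \kappa, \nu)$, exactly as in the unbalanced case.

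The key input is the analogue of \eqref{eq:phi}: for each $\mathbf{b}$ with $2\mathbf{b} \le \mathbf{a}$,
\[
\phi_{\mu_{t+1}, \mathbf{b}}\bigl(\ker(\phi_{\mu, \mathbf{b}})\bigr) \cap \im(\phi_{\mu, \mathbf{b}}) = 0.
\]
Suppose $x \in \ker(\phi_{\mu, \mathbf{b}})$ and $\phi_{\mu_{t+1}, \mathbf{b}}(x) = \phi_{\mu, \mathbf{b}}(y)$ for some $y$. Setting the test vector equal to $x$ gives $\mu_{t+1}(x \cdot x \cdot \boldsymbol{\ell}^{\mathbf{a} - 2\mathbf{b}}) = \mu(y \cdot x \cdot \boldsymbol{\ell}^{\mathbf{a} - 2\mathbf{b}}) = 0$; since $\mu_{t+1}$ is defined over $\mathbb{F}_2$, the anisotropy statement in Proposition~\ref{thm:SLPOba} forces $x$ to lie in $\ker(\phi_{\mu_{t+1}, \mathbf{b}})$, whence $\phi_{\mu_{t+1}, \mathbf{b}}(x) = 0$.

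With the claim in hand, Lemma~\ref{lem:perturbation} applied with $\alpha = \phi_{\mu, \mathbf{b}}$ and $\beta = \phi_{\mu_{t+1}, \mathbf{b}}$ produces, for each such $\mathbf{b}$, a cofinite set of $\lambda \in K$ for which $\ker(\phi_{\mu, \mathbf{b}} + \lambda \phi_{\mu_{t+1}, \mathbf{b}}) = \ker(\phi_{\mu, \mathbf{b}}) \cap \ker(\phi_{\mu_{t+1}, \mathbf{b}})$. Because there are only finitely many $\mathbf{b}$ with $2\mathbf{b} \le \mathbf{a}$ and $K$ is infinite, one may choose a single $\lambda_{t+1}$ working for all such $\mathbf{b}$ simultaneously; the usual chain of inclusions then forces the kernel of $\overline{H}^{\mathbf{b}}(\Delta, \kappa) \to \overline{H}^{\mathbf{b}}(\Delta, \kappa, \mu + \lambda_{t+1} \mu_{t+1})$ to agree with the intersection, completing the induction. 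If $k$ is infinite, each $\lambda_{t+1}$ can be taken in $k$, so $\mu$ is defined over $k$. The only subtlety beyond transcribing the original argument is the simultaneous choice of $\lambda_{t+1}$ across the finitely many multi-degrees, which is immediate from the infinitude of $K$; the real content is again the anisotropy statement of Proposition~\ref{thm:SLPOba}.
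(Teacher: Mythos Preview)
Your proposal is correct and is precisely the ``minor modification of the proof of Theorem~\ref{thm:gorensteinhilbert}'' that the paper invokes: you replace the single index $q \le d/2$ by multi-indices $\mathbf{b}$ with $2\mathbf{b} \le \mathbf{a}$, swap Proposition~\ref{prop:anistropycycle} for Proposition~\ref{thm:SLPOba}, and otherwise run the same perturbation-lemma induction. The one point you flag as new---choosing a single $\lambda_{t+1}$ that works for all finitely many $\mathbf{b}$ simultaneously---is in fact already implicit in the original proof (where one must handle all $q \le d/2$ at once), so there is nothing genuinely additional here.
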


Define the complementary vector of $(\Delta,\kappa)$ by 
$\overline{c}_{\textbf{b}} := \overline{h}_{\textbf{a} - \textbf{b}}  - \overline{h}_{\textbf{b}} $ for $\textbf{b}$ with $2\textbf{b} < \textbf{a}$. 
We now proceed as in the
proof of Corollary~\ref{cor:complementaryvector}.  
Theorem~\ref{thm:gorensteinhilbertbalanced} implies there  exists a linear map
$\mu \colon \overline{H}^d(\Delta,\kappa) \to K$ 
such that $\dim \overline{H}^{\textbf{b}}(\Delta, \kappa,\mu) = \overline{h}_{\textbf{b}}$ for each $\textbf{b} \in \N^m$ with $2\textbf{b} \le \textbf{a}$. 
Let $M$ be the kernel of the quotient map $\overline{H}(\Delta, \kappa) \to \overline{H}(\Delta, \kappa, \mu)$, and let $M^*$ be the graded dual of $M$. Both $M$ and $M^*$ admit  $\N^m$-gradings that are compatible with their perfect pairing. 
The Hilbert function of $M^*$ (with respect to the $\N^m$-grading) is precisely the complementary vector of $(\Delta,\kappa)$. 
Using the standard $\N$-grading, the socle of $M$ is concentrated in degree $d$, and it follows that $M^*$ is generated in degree $0$. 
We conclude that we have the following analogue of Corollary~\ref{cor:complementaryvector}. 

\begin{corollary}\label{cor:complementaryvectorbalanced}
Let $(\Delta, \kappa)$ be a balanced complex with $\dim \Delta = d - 1$.
Let $k$ be a field of characteristic $2$, and assume that $\beta_{d - 1} \neq 0$. 
Then the complementary vector of $(\Delta, \kappa)$ is the Hilbert function of an $\N^m$-graded module over an $\N^m$-graded polynomial ring
 in finitely many variables which is generated as a module in degree $0$. 
\end{corollary}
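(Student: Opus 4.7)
The plan is to mimic the proof of Corollary~\ref{cor:complementaryvector}, taking care that every module and pairing respects the finer $\N^m$-grading. The author's sketch following Theorem~\ref{thm:gorensteinhilbertbalanced} is essentially the scaffolding; I would flesh it out as follows.

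First, apply Theorem~\ref{thm:gorensteinhilbertbalanced} to choose an $\N^m$-graded map $\mu \colon \overline{H}^d(\Delta,\kappa) \to K$ such that $\dim \overline{H}^{\textbf{b}}(\Delta,\kappa,\mu) = \overline{h}_{\textbf{b}}$ for every $\textbf{b}$ with $2\textbf{b} \le \textbf{a}$ (and in particular the Lefschetz isomorphisms hold in each such multidegree). Let $M$ be the kernel of the quotient $\overline{H}(\Delta,\kappa) \to \overline{H}(\Delta,\kappa,\mu)$. Because the quotient is by an $\N^m$-graded ideal, $M$ inherits an $\N^m$-grading, and $\dim M^{\textbf{b}} = \overline{h}_{\textbf{b}} - \dim \overline{H}^{\textbf{b}}(\Delta,\kappa,\mu)$ for all $\textbf{b} \le \textbf{a}$. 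Define the graded dual $M^*$ by $(M^*)^{\textbf{b}} := \operatorname{Hom}_K(M^{\textbf{a}-\textbf{b}}, K)$, with $y \in \overline{H}(\Delta,\kappa)$ acting via $(y \cdot f)(z) = f(y \cdot z)$; this is a well-defined $\N^m$-graded module over $\overline{H}(\Delta,\kappa)$.

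Next, compute the Hilbert function of $M^*$. For $\textbf{b}$ with $2\textbf{b} \le \textbf{a}$, the Lefschetz isomorphism forces $\dim \overline{H}^{\textbf{a}-\textbf{b}}(\Delta,\kappa,\mu) = \dim \overline{H}^{\textbf{b}}(\Delta,\kappa,\mu) = \overline{h}_{\textbf{b}}$, so $\dim (M^*)^{\textbf{b}} = \dim M^{\textbf{a}-\textbf{b}} = \overline{h}_{\textbf{a}-\textbf{b}} - \overline{h}_{\textbf{b}} = \overline{c}_{\textbf{b}}$, as required. The remaining task is to prove that $M^*$ is generated in multidegree $0$. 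As in the proof of Corollary~\ref{cor:complementaryvector}, the perfect pairing between $M$ and $M^*$ descends to a perfect pairing
\[
(M^*/\overline{H}^{>0}(\Delta,\kappa) \cdot M^*) \times \Soc(M) \to K,
\]
where $\Soc(M) = \{y \in M : y \cdot \overline{H}^{>0}(\Delta,\kappa) = 0\}$. This pairing is $\N^m$-graded, so it suffices to show that $\Soc(M)$ is concentrated in multidegree $\textbf{a}$.

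Here is where I would use the underlying standard $\N$-grading: since $M$ is an ideal in the level algebra $\overline{H}(\Delta,\kappa)$, its socle is concentrated in total $\N$-degree $d$. The only $\textbf{b} \le \textbf{a}$ with $|\textbf{b}| = d$ is $\textbf{b} = \textbf{a}$, so $\Soc(M)$ lies in the single multidegree $\textbf{a}$; dually, $M^*/\overline{H}^{>0} \cdot M^*$ is concentrated in multidegree $0$, and the graded Nakayama lemma shows that $M^*$ is generated over $\overline{H}(\Delta,\kappa)$ in multidegree $0$. Since $\overline{H}(\Delta,\kappa)$ is an $\N^m$-graded quotient of the $\N^m$-graded polynomial ring $K[x_1,\dotsc,x_n]$ (with $x_j$ of multidegree $e_{\kappa(j)}$), $M^*$ is such a module over that polynomial ring, completing the proof.

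The only subtlety, and what I would expect to double-check carefully, is the verification that levelness of $\overline{H}(\Delta,\kappa)$ (which is stated with respect to the $\N$-grading) really forces $\Soc(M)$ to live in $\N$-degree $d$ and hence in $\N^m$-multidegree $\textbf{a}$; everything else is a routine adaptation of the standard $\N$-graded argument to the $\N^m$-graded context.
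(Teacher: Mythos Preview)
Your proposal is correct and follows essentially the same approach as the paper: apply Theorem~\ref{thm:gorensteinhilbertbalanced} to get $\mu$, take $M = \ker(\overline{H}(\Delta,\kappa)\to\overline{H}(\Delta,\kappa,\mu))$ and its $\N^m$-graded dual $M^*$, identify the Hilbert function of $M^*$ with the complementary vector, and use levelness in the underlying $\N$-grading to force $\Soc(M)$ into degree $d$ (hence multidegree $\textbf{a}$), so that $M^*$ is generated in multidegree $0$. Your write-up in fact spells out a few points (the Lefschetz step giving $\dim \overline{H}^{\textbf{a}-\textbf{b}}(\Delta,\kappa,\mu)=\overline{h}_{\textbf{b}}$, and why $|\textbf{b}|=d$ forces $\textbf{b}=\textbf{a}$) that the paper leaves implicit.
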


It would be interesting to further explore the numerical consequences of this results.

\subsection{Pure $O$-sequences}\label{ss:pure}

Recall that an $M$-vector (or $O$-sequence) is the Hilbert function of a finite dimensional standard graded $K$-algebra. In fact, it follows from Macaulay's work \cite{MacaulayPropertiesEnumeration} that an $O$-sequence may alternatively be defined as the Hilbert function of a finite dimensional monomial algebra, i.e., quotient of a polynomial ring by a monomial ideal. 
A \emph{pure $O$-sequence} is the Hilbert function of a finite dimensional monomial algebra that is level, i.e., the socle is concentrated in top degree. A long-standing conjecture of Stanley asserts that the $h$-vector of the independence complex of a matroid is a pure $O$-sequence \cite{StanleyCohenMacaulayComplexes}. 

Working over $k = \C$, Hausel \cite[Theorem~6.3]{HauselQuaternionicMatroids} proved that finite dimensional level monomial algebras satisfy a natural analogue of Corollary~\ref{cor:gelement}, and so pure $O$-sequences satisfy 
the inequalities in Corollary~\ref{cor:topheavy}. Given a pure $O$-sequence $(a_0,\ldots,a_d)$ with $a_d \neq 0$, define the corresponding complementary vector as $(c_0, \ldots, c_{\lfloor (d - 1)/2 \rfloor})$, where $c_q = a_{d - q} - a_{q}$ for each $q \le (d - 1)/2$. Then Hausel's theorem implies that the complementary vector  has nonnegative coefficients. We conjecture the following analogue of part of Corollary~\ref{cor:complementaryvector}. 

\begin{conjecture}\label{conj:pure}
The complementary vector of a pure $O$-sequence is a sum of $M$-vectors.
\end{conjecture}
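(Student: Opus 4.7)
The plan is to imitate the deduction of Corollary~\ref{cor:complementaryvector} from Theorem~\ref{thm:gorensteinhilbert}, with the simplicial complex $\Delta$ replaced by an arbitrary finite dimensional level monomial algebra. Fix a pure $O$-sequence $(a_0,\ldots,a_d)$ with $a_d \neq 0$, and realize it as the Hilbert function of a level monomial algebra $A = k[x_1,\ldots,x_n]/I$ of socle degree $d$, where $k$ is a field of characteristic $2$. Pass to a generic parameter field $K$ and set $\ell = x_1 + \cdots + x_n \in A^1$. The goal is to produce a linear functional $\mu \colon A^d \to K$ so that the Gorenstein quotient $A_\mu$ satisfies $\dim A_\mu^q = a_q$ for all $q \le d/2$, and then apply Matlis duality and Nakayama exactly as in the proof of Corollary~\ref{cor:complementaryvector}: letting $M$ be the kernel of $A \to A_\mu$, the graded dual $M^*$ is an $A$-module whose Hilbert function is the complementary vector of $(a_0,\ldots,a_d)$, and the levelness of $A$ forces the socle of $M$ (as an ideal in $A$) to sit in degree $d$. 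Consequently $M^*/A^{>0}M^*$ is concentrated in degree $0$, graded Nakayama yields that $M^*$ is generated in degree $0$ as an $A$-module, and since $A$ is a quotient of a polynomial ring in $a_1$ variables, Proposition~\ref{prop:summvec} concludes.

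Producing $\mu$ would follow the inductive perturbation scheme of the proof of Theorem~\ref{thm:gorensteinhilbert}. Namely, choose an $\mathbb{F}_2$-rational basis $\mu_1,\ldots,\mu_s$ of $\Hom(A^d,K)$ by taking $\mu_i$ to be the functional dual to the $i$th element of a monomial basis of the socle $A^d$. Starting from $\mu = \mu_1$ and proceeding inductively on $t$, combine the $\mu_i$ into a single $\mu = \sum_{i \le t} \lambda_i \mu_i$ by applying Lemma~\ref{lem:perturbation} to the Lefschetz maps $\phi_{\nu} \colon A^q \to \Hom(A^q,K)$, $\phi_\nu(x) = (y \mapsto \nu(xy\ell^{d-2q}))$, exactly as in the proof of Theorem~\ref{thm:gorensteinhilbert}. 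The only input from the Stanley--Reisner world needed to run this scheme is the analogue of Proposition~\ref{prop:anistropycycle}: for every $q \le d/2$ and every socle-basis functional $\mu_i$, the symmetric bilinear form on $A^q$ given by $(x,y) \mapsto \mu_i(xy\ell^{d-2q})$ is anisotropic.

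The main obstacle is to establish this anisotropy for arbitrary level monomial algebras in characteristic $2$. The natural strategy is to extend the differential-operators technique of Papadakis--Petrotou, Adiprasito--Papadakis--Petrotou, and Karu--Xiao from Stanley--Reisner rings to general monomial quotients. The Gorenstein quotient $A_{\mu_i}$ attached to a single socle monomial $m = \prod x_j^{e_j}$ is the complete intersection $k[x_j : e_j > 0]/(x_j^{e_j+1})$, so it is enough to prove anisotropy for this family. A promising route is to polarize $A_{\mu_i}$ to the Stanley--Reisner ring of an associated simplicial complex (a product of boundaries of simplices), where Proposition~\ref{prop:anistropycycle} applies, and then transfer the anisotropy statement back along the polarization. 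The subtle part, which is where I expect the substantive work to lie, is to show that the squaring identity underpinning the Papadakis--Petrotou computation descends through this polarization: a level monomial algebra is not generally Cohen--Macaulay over a Stanley--Reisner subring, so the ``boundary of a simplex'' reduction step of Section~2.6 of Karu--Xiao has to be replaced by an inverse-system analogue that tracks the socle monomial $m$ rather than a facet of a simplicial complex. If this anisotropy is established, the remainder of the proof is a direct translation of Sections~2 and~3 of the present paper.
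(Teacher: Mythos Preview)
The statement you are attempting to prove is presented in the paper as an open \emph{conjecture}; the paper offers no proof. More importantly, immediately after stating the conjecture the authors point out that Example~\ref{ex:monomial} shows the analogue of Theorem~\ref{thm:gorensteinhilbert} can fail for level monomial algebras. Your entire strategy is to prove exactly that analogue, so the paper already contains an explicit counterexample to your plan.

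Concretely, the algebra $A = k[x,y,z]/(x^3, x^2y, x^2z, yz, y^2, z^2)$ of Example~\ref{ex:monomial} is a level monomial algebra with Hilbert function $(1,3,3)$, and it is shown there that $A$ admits \emph{no} Gorenstein quotient with Hilbert function $(1,3,1)$, for any linear functional $\mu$ whatsoever. Thus the $\mu$ you hope to produce by the perturbation scheme simply does not exist in this case. Tracing back, the anisotropy input you isolate as ``the main obstacle'' is genuinely false here: for the socle functional dual to $x^2$, the middle Hodge--Riemann form on $A^1$ vanishes on $y$ and $z$ (since $y^2 = z^2 = 0$), and there is no transcendental parameter field to rescue this---your remark ``pass to a generic parameter field $K$'' has no content for a monomial algebra, because unlike the Stanley--Reisner setting there are no $a_{i,j}$ over which to be generic. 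The polarization idea does not help either: polarizing introduces a linear system of parameters whose specialization back to $A$ kills the very genericity that makes Proposition~\ref{prop:anistropycycle} work. Any proof of Conjecture~\ref{conj:pure} will have to proceed by a route that does not go through a single Gorenstein quotient matching $a_q$ for $q \le d/2$.
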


Note that Example~\ref{ex:monomial} and Example~\ref{ex:monomial2} show that the analogue of Theorem~\ref{thm:gorensteinhilbert} may or may not hold, i.e., a finite dimensional level monomial algebra may or may not admit a Gorenstein quotient whose Hilbert function is the same up to half of the socle degree.

\bibliography{cycles}

\newcommand{\etalchar}[1]{$^{#1}$}
\providecommand{\bysame}{\leavevmode\hbox to3em{\hrulefill}\thinspace}
\providecommand{\MR}{\relax\ifhmode\unskip\space\fi MR }
\providecommand{\MRhref}[2]{%
  \href{http://www.ams.org/mathscinet-getitem?mr=#1}{#2}
}
\providecommand{\href}[2]{#2}
\begin{thebibliography}{AHK{\etalchar{+}}25}

\bibitem[ADH23]{ADH}
Federico Ardila, Graham Denham, and June Huh, \emph{Lagrangian geometry of
  matroids}, J. Amer. Math. Soc. \textbf{36} (2023), no.~3, 727--794.
  \MR{4583774}

\bibitem[Adi18]{Adiprasitog}
Karim Adiprasito, \emph{Combinatorial {L}efschetz theorems beyond positivity},
  ar{X}iv:1812.10454v4.

\bibitem[AF24]{AFConvexEar}
Christos~A. Athanasiadis and Luis Ferroni, \emph{A convex ear decomposition of
  the augmented {B}ergman complex of a matroid}, 2024, ar{X}iv:2410.08812.

\bibitem[AHK{\etalchar{+}}25]{moment}
Karim Adiprasito, Kaiying Hou, Daishi Kiyohara, Daniel Koizumi, and Monroe
  Stephenson, \emph{$ p $-anisotropy on the moment curve for homology manifolds
  and cycles}, ar{X}iv:2502.05681v1.

\bibitem[APP21]{APP}
Karim Adiprasito, Stavros~Argyrios Papadakis, and Vasiliki Petrotou,
  \emph{Anisotropy, biased pairings, and the {L}efschetz property for
  pseudomanifolds and cycles}, ar{X}iv:2101.07245v2.

\bibitem[AS17]{ASNonUnimodal}
Jeaman Ahn and Yong-Su Shin, \emph{Nonunimodal {G}orenstein sequences of higher
  socle degrees}, J. Algebra \textbf{477} (2017), 239--277.

\bibitem[AT21]{ATSymmetricDecompositions}
Christos~A. Athanasiadis and Eleni Tzanaki, \emph{Symmetric decompositions,
  triangulations and real-rootedness}, Mathematika \textbf{67} (2021), no.~4,
  840--859.

\bibitem[AW12]{AWBuchsbaumComplexes}
Christos~A. Athanasiadis and Volkmar Welker, \emph{Buchsbaum{$^*$} complexes},
  Math. Z. \textbf{272} (2012), no.~1-2, 131--149. \MR{2968218}

\bibitem[BE00]{BlancafortElias}
Cristina Blancafort and Juan Elias, \emph{On the growth of the {H}ilbert
  function of a module}, Math. Z. \textbf{234} (2000), no.~3, 507--517.
  \MR{1774095}

\bibitem[BGIZ24]{BGIZMinimalGorenstein}
Lenin Bezerra, Rodrigo Gondim, Giovanna Ilardi, and Giuseppe Zappal\`a,
  \emph{On minimal {G}orenstein {H}ilbert functions}, Rev. R. Acad. Cienc.
  Exactas F\'is. Nat. Ser. A Mat. RACSAM \textbf{118} (2024), no.~1, Paper No.
  29, 17.

\bibitem[BL81]{BLProofSufficiency}
Louis~J. Billera and Carl~W. Lee, \emph{A proof of the sufficiency of
  {M}c{M}ullen's conditions for {$f$}-vectors of simplicial convex polytopes},
  J. Combin. Theory Ser. A \textbf{31} (1981), no.~3, 237--255.

\bibitem[Bry82]{BrylawksiTutte}
Thomas Brylawski, \emph{The {T}utte polynomial. {I}. {G}eneral theory}, Matroid
  theory and its applications, Liguori, Naples, 1982, pp.~125--275. \MR{863010}

\bibitem[BST23]{BST}
Andrew Berget, Hunter Spink, and Dennis Tseng, \emph{Log-concavity of matroid
  {$h$}-vectors and mixed {E}ulerian numbers}, Duke Math. J. \textbf{172}
  (2023), no.~18, 3475--3520. \MR{4718433}

\bibitem[Cha97]{ChariTwoDecompositions}
Manoj~K. Chari, \emph{Two decompositions in topological combinatorics with
  applications to matroid complexes}, Trans. Amer. Math. Soc. \textbf{349}
  (1997), no.~10, 3925--3943.

\bibitem[Eis95]{EisenbudCommutativeAlgebra}
David Eisenbud, \emph{Commutative algebra}, Graduate Texts in Mathematics, vol.
  150, Springer-Verlag, New York, 1995, With a view toward algebraic geometry.

\bibitem[GHMS07]{GHMSHilbertFunctionLevelAlgebra}
Anthony~V. Geramita, Tadahito Harima, Juan~C. Migliore, and Yong~Su Shin,
  \emph{The {H}ilbert function of a level algebra}, Mem. Amer. Math. Soc.
  \textbf{186} (2007), no.~872, vi+139.

\bibitem[Hau05]{HauselQuaternionicMatroids}
Tam\'as Hausel, \emph{Quaternionic geometry of matroids}, Cent. Eur. J. Math.
  \textbf{3} (2005), no.~1, 26--38.

\bibitem[Hul95]{HulGeneralizationMacaulay}
Heather~A. Hulett, \emph{A generalization of {M}acaulay's theorem}, Comm.
  Algebra \textbf{23} (1995), no.~4, 1249--1263.

\bibitem[Iar24]{IarrobinoLogConcave}
Anthony~A. Iarrobino, \emph{Log-concave {G}orenstein sequences}, J. Commut.
  Algebra \textbf{16} (2024), no.~1, 25--36.

\bibitem[KLS24]{KLS}
Kalle Karu, Matt Larson, and Alan Stapledon, \emph{Differential operators,
  anisotropy, and simplicial spheres}, ar{X}iv:2412.04561v1.

\bibitem[KX23]{KaruXiaoAnisotropy}
Kalle Karu and Elizabeth Xiao, \emph{On the anisotropy theorem of {P}apadakis
  and {P}etrotou}, Algebr. Comb. \textbf{6} (2023), no.~5, 1313--1330.

\bibitem[Lee96]{Lee}
Carl~W. Lee, \emph{P.{L}.-spheres, convex polytopes, and stress}, Discrete
  Comput. Geom. \textbf{15} (1996), no.~4, 389–421.

\bibitem[LSN24]{LNS}
Matt Larson, Alan Stapledon, and Isabella Novik, \emph{Determinants of
  {H}odge--{R}iemann forms}, ar{X}iv:2408.02737v3.

\bibitem[Mac27]{MacaulayPropertiesEnumeration}
F.~S. MacAulay, \emph{Some {P}roperties of {E}numeration in the {T}heory of
  {M}odular {S}ystems}, Proc. London Math. Soc. (2) \textbf{26} (1927),
  531--555.

\bibitem[MT18]{MTStructureInverse}
Shreedevi~K. Masuti and Laura Tozzo, \emph{The structure of the inverse system
  of level {$K$}-algebras}, Collect. Math. \textbf{69} (2018), no.~3, 451--477.

\bibitem[Nag11]{NagelLevelAlgebras}
Uwe Nagel, \emph{Level algebras through {B}uchsbaum{$^*$} manifolds}, Collect.
  Math. \textbf{62} (2011), no.~2, 187--196.

\bibitem[NS04]{NSInequalities}
Kathryn Nyman and Ed~Swartz, \emph{Inequalities for the {$h$}-vectors and flag
  {$h$}-vectors of geometric lattices}, Discrete Comput. Geom. \textbf{32}
  (2004), no.~4, 533--548.

\bibitem[NS09]{NovikSwartzGorensteinRings}
Isabella Novik and Ed~Swartz, \emph{Gorenstein rings through face rings of
  manifolds}, Compos. Math. \textbf{145} (2009), no.~4, 993--1000.

\bibitem[Oba24]{Oba}
Ryoshun Oba, \emph{Multigraded strong {L}efschetz property for balanced
  simplicial complexes}, 2024, ar{X}iv:2408.17110v1.

\bibitem[PP20]{PapadakisPetrotoug}
Stavros~Argyrios Papadakis and Vasiliki Petrotou, \emph{The characteristic 2
  anisotropicity of simplicial spheres}, ar{X}iv:2012.09815v1.

\bibitem[PSZ19]{PSZGorensteinInterval}
Sung~Gi Park, Richard~P. Stanley, and Fabrizio Zanello, \emph{Proof of the
  {G}orenstein interval conjecture in low socle degree}, J. Algebra
  \textbf{523} (2019), 192--200.

\bibitem[Sch09]{Schweig1}
Jay Schweig, \emph{A convex-ear decomposition for rank-selected subposets of
  supersolvable lattices}, SIAM J. Discrete Math. \textbf{23} (2009), no.~2,
  1009--1022. \MR{2519941}

\bibitem[Sch11]{Schweig2}
\bysame, \emph{Convex-ear decompositions and the flag h-vector}, Electron. J.
  Combin. \textbf{18} (2011), no.~1, Paper 4, 14. \MR{2770109}

\bibitem[Sta77]{StanleyCohenMacaulayComplexes}
Richard~P. Stanley, \emph{Cohen-{M}acaulay complexes}, Higher combinatorics
  ({P}roc. {NATO} {A}dvanced {S}tudy {I}nst., {B}erlin, 1976), NATO Adv. Study
  Inst. Ser. C: Math. Phys. Sci., vol.~31, Reidel, Dordrecht-Boston, Mass.,
  1977, pp.~51--62.

\bibitem[Sta78]{StanleyHilbertGradedAlgebras}
\bysame, \emph{Hilbert functions of graded algebras}, Advances in Math.
  \textbf{28} (1978), no.~1, 57--83.

\bibitem[Sta96]{StanleyCombinatoricsCommutative}
\bysame, \emph{Combinatorics and commutative algebra}, second ed., Progress in
  Mathematics, vol.~41, Birkh\"auser Boston, Inc., Boston, MA, 1996.

\bibitem[Swa06]{SwartzgElements}
Ed~Swartz, \emph{{$g$}-elements, finite buildings and higher {C}ohen-{M}acaulay
  connectivity}, J. Combin. Theory Ser. A \textbf{113} (2006), no.~7,
  1305--1320.

\bibitem[Zan24]{ZanelloLogConcavity}
Fabrizio Zanello, \emph{Log-concavity of level {H}ilbert functions and pure
  {$O$}-sequences}, J. Commut. Algebra \textbf{16} (2024), no.~2, 245--256.

\end{thebibliography}
\bibliographystyle{amsalpha}

\end{document}